\definecolor{mygray}{gray}{0.95}
\newcommand{\customlabel}[2]{%
	\protected@write \@auxout {}{\string \newlabel {#1}{{#2}{\thepage}{#2}{#1}{}} }%
	\hypertarget{#1}{#2}
}
\newcommand{\labterm}[2]{\customlabel{#1}{\text{$#2$}}}
\newtheorem{theorem}{Theorem}
\newtheorem{corollary}[theorem]{Corollary}
\newtheorem{lemma}[theorem]{Lemma}
\newtheorem{remark}{Remark}
\numberwithin{equation}{section}
\numberwithin{theorem}{section}
\numberwithin{table}{section}
\numberwithin{figure}{section}
\numberwithin{remark}{section}
\newcommand{\R}{\mathbb{R}}%
\newcommand{\N}{\mathbb{N}}%
\newcommand{\cG}{{\mathcal G}}
\newcommand{\cL}{{\mathcal L}}
\DeclareMathOperator*\gra{gra}
\DeclareMathOperator*\zer{zer}
\DeclareMathOperator{\Span}{span}%
\newcommand{\bra}{ \left \langle  }
\newcommand{\ket}{ \right \rangle  }
\title{Extra-Gradient Method with Flexible Anchoring: Strong Convergence and Fast Residual Decay}
\author{Radu I. Bo\c t\thanks{Faculty of Mathematics, University of Vienna, Oskar-Morgenstern-Platz 1, 1090 Vienna, Austria.\\ The research of RIB and EC has been supported by the Austrian Science Fund (FWF), projects W 1260 and P 34922-N, respectively. \texttt{email:} radu.bot@univie.ac.at, enis.chenchene@univie.ac.at} \and Enis Chenchene\footnotemark[1]}
\begin{document}
\maketitle

\begin{abstract}
    In this paper, we introduce a novel Extra-Gradient method with anchor term governed by general parameters. Our method is derived from an explicit discretization of a Tikhonov-regularized monotone flow in Hilbert space, which provides a theoretical foundation for analyzing its convergence properties. We establish strong convergence to specific points within the solution set, as well as convergence rates expressed in terms of the regularization parameters. Notably, our approach recovers the fast residual decay rate $O(k^{-1})$ for standard parameter choices. Numerical experiments highlight the competitiveness of the method and demonstrate how its flexible design enhances practical performance.
\end{abstract}

\noindent \textbf{Key Words.} monotone equations, Tikhonov regularization, strong convergence, convergence rates, extra-gradient method, Lyapunov analysis, explicit discretization\\

\noindent \textbf{AMS subject classification.} 47J20, 47H05, 49K35, 65K10, 65K15, 90C25

%\tableofcontents

\section{Introduction}

Tikhonov regularization has a long history in optimization. Originally introduced in the context of inverse problems as a powerful tool to stabilize ill-conditioned linear systems, it has quickly become popular as a simple mechanism to induce desirable properties in dynamical systems, such as strong convergence to a particular solution \cite{cps08} and acceleration \cite{bk24}. Let $H$ be a real Hilbert space and consider the following problem:
\begin{equation}\label{eq:intro_monotone_inclusion}
    \text{Find} \ x \in H: \ M(x) = 0\,,
\end{equation}
where $M\colon H\to H$ is a monotone and $L$-Lipschitz continuous operator. One might be tempted to approach \eqref{eq:intro_monotone_inclusion} by considering the asymptotic behaviour of the following dynamical system
\begin{equation}\label{eq:monotone_flow}
    \dot x(t) + M(x(t)) =0 \quad \text{for} \ t \geq t_0\,, 
\end{equation}
and $t_0>0$. However, trajectories of \eqref{eq:monotone_flow} are well known to only converge weakly and \emph{ergodically} to a zero of $M$ \cite{bb76}. Tikhonov regularization \cite{cps08} is one possible modification that allows us to overcome this issue. It consists in introducing a \emph{regularization} function $\varepsilon\colon [t_0, +\infty)\to (0, +\infty)$ and considering the regularized system
\begin{equation}\label{eq:tychonov_flow}
    \dot x(t) + M(x(t)) + \varepsilon(t) \left(x(t) - v\right) =0 \quad \text{for} \ t \geq t_0\,,
\end{equation}
where $v \in H$ is the so-called \emph{anchor term}. If $\varepsilon$ is constant, \eqref{eq:tychonov_flow} is a special case of \eqref{eq:monotone_flow} with respect to the strongly monotone operator $M_\varepsilon:= M + \varepsilon (I - v)$, $I \colon H\to H$ being the identity operator. This dynamical system does indeed allow us to get a converging trajectory, but the limit is a zero of $M_\varepsilon$, hence suggesting $\varepsilon(t)\to 0$ as $t \to +\infty$. If $\varepsilon$ vanishes too fast, e.g., $\varepsilon \in L^1([t_0, +\infty), \R)$, the system \eqref{eq:tychonov_flow} is qualitatively equivalent to \eqref{eq:monotone_flow}, see \cite{ap10}. It generates trajectories weakly converging to a zero of $M$ provided that, e.g., $M$ is demipositive \cite{bruck75}. The interesting case is thus
\begin{equation}\label{eq:intro_parameters}
    \lim_{t \to +\infty}  \varepsilon(t)=0\, \quad \text{and} \quad \int_{t_0}^{+\infty} \varepsilon(t) dt = +\infty\,.
\end{equation}
In this setting, strong convergence of $x(t)$ to the projection of $v$ onto $\zer M$ as $t \rightarrow +\infty$ has been established under a list of additional parameter assumptions. Based on previous results by Browder \cite{browder67} in 1967, Reich in the seventies obtained the first result in this direction in \cite{reich77}, which requires $\varepsilon$ to be nonincreasing. Israel and Reich in \cite{ir81} later replaced this assumption with $\frac{|\dot \varepsilon(t)|}{\varepsilon^2(t)} \to 0$ as $t \to +\infty$. However, the latter does not include choices such as $\varepsilon(t) = \frac{\alpha}{t}$ for $\alpha>0$ and all $t\geq t_0$. This case was covered by Cominetti, Peypouquet, and Sorin in \cite{cps08} replacing the condition with $\int_{t_0}^{+\infty} |\dot \varepsilon(t)|dt = +\infty$, and by Bo\c t and Nguyen in \cite{bk24} who instead proposed $\frac{|\dot \varepsilon(t)|}{\varepsilon(t)} \to 0$ as $t\to +\infty$.

In parallel and independently, Halpern in 1967 in \cite{halpern67} initiated a flourished research line on the so-called \emph{anchored} iterative methods to find fixed points of non-expansive operators. For a nonexpansive operator $T\colon H\to H$, these are numerical algorithms of the form
\begin{equation}\label{eq:intro_halpern}
    x^{k+1} := \varepsilon^k v + (1-\varepsilon^k)T(x^k)\, \quad \text{for all} \ k \geq 0,
\end{equation}
for $(\varepsilon^k)_k$ satisfying, analogously to \eqref{eq:intro_parameters}, at least
\begin{equation}\label{eq:intro_parameters_discrete}
 \lim_{k\to +\infty}\varepsilon^k = 0\, \quad \text{and} \quad \sum_{k = 0}^{+\infty} \varepsilon^k = +\infty\,,   
\end{equation}
among additional assumptions. Lions in \cite{lions77} in the seventies complemented \eqref{eq:intro_parameters_discrete} by imposing $\frac{|\varepsilon^{k+1}-\varepsilon^k|}{(\varepsilon^k)^2}\to 0$ as $k \to +\infty$. However, this condition does not allow for choices of the form $\varepsilon^k = \frac{\alpha}{k + \beta}$ for all $k \geq 1$ and $\alpha, \beta >0$. Wittman \cite{wittmann92} later closed this gap with the assumption: $\sum_{k=0}^{+\infty} |\varepsilon^{k+1}-\varepsilon^k| < +\infty$. Perhaps inspired by the continuous-time approach, Reich in \cite{reich94} established that strong convergence of $(x^k)_k$ also holds when $(\varepsilon^k)_k$ is nonincreasing. Leaving the monotonicity of the sequence, Xu in \cite{xu02} recognized that, in fact, a sufficient condition for the convergence of the sequence is $\frac{|\varepsilon^{k+1}-\varepsilon^k|}{\varepsilon^k}\to 0$ as $k \to +\infty$. This flagrantly evident connection between the two theories (Tikhonov regularization for continuous time dynamics and Halpern/anchoring mechanisms for discrete time schemes) has seemingly been elusive for almost $60$ years until the first attempts of reconciliation were established in \cite{bk24, jpr24}. In fact, \eqref{eq:intro_halpern} can be derived and analyzed as a discretization of the Tikhonov flow \eqref{eq:tychonov_flow}, see \cite{bk24}.

The growing interest in Tikhonov or anchoring mechanisms of type \eqref{eq:intro_halpern} is not only due to their desirable property of generating sequences that converge strongly to specific points in the set of solutions, but also to their inherent acceleration. As first observed by Sabach and Shtern in \cite{ss17}, the condition introduced by Xu in \cite{xu02} can also be used to obtain a rate of $O(k^{-1})$ for the fixed-point residual $\|x^k-T(x^k)\|$ as $k \rightarrow + \infty$ for the choices of $(\varepsilon^k)_k$ of the form $\varepsilon^k = \frac{2}{k + 2}$ for all $k \geq 0$. This result was further tightened by Lieder in \cite{lieder21}, who improved the constant by Sabach and Shtern by a factor of $4$ for $\varepsilon^k = \frac{1}{k+2}$ for all $k \geq 0$, and established a tightness result. Later, Ryu and Park in \cite{pr22} proved that this choice is indeed optimal with respect to a large class of methods in a sense that is very close in spirit to Nemirovski's classical result \cite{NemirovskiYudin1983} for convex optimization. Not surprisingly, these acceleration results can be adapted to the continuous time setting \cite{bk24, jpr24}.

\begin{table}[b]
\centering
\begin{tabular}{@{}lll@{}}
    \toprule
    \textbf{Method} & $ \left\{\begin{aligned}
		&y^{k+1} := x^k - \varepsilon^k (x^k - v) - \eta^k M(z^{k+1})\,,\\
		&x^{k+1} := x^k - \varepsilon^k (x^k - v) - \theta^k M(y^{k+1})\,.
	\end{aligned} \right.
    $ 
        & \makecell[l]{$\eta^k, \theta^k, \varepsilon^k \in \mathbb{R}$,\\ $z^{k+1}\in H$.}\\
    \midrule
    \textbf{Reference} & \textbf{Specification} & \textbf{Characteristics} \\[0.3cm]
    \cite{yr21}  \text{EAG}-C & $z^{k+1} = x^k\,, \ \eta^k = \theta^k = \eta\,,  \ \varepsilon^k = \frac{1}{k+2}$, &
     $\eta \in \big(0, \frac{1}{8L}\big]$  \\
     \cite{yr21} \text{EAG}-V & $z^{k+1} = x^k\,, \ \eta^k = \theta^k$ as in \eqref{eq:step-size_eag_v}, $\varepsilon^k = \frac{1}{k+2}$, & Improved rate over EAG-C \\
     \cite{lk21} \text{FEG} & $z^{k+1} = x^k\,, \ \eta^k = \frac{1}{L}\left(1- \frac{1}{k+1}\right)\,, \ \theta^k = \frac{1}{L}\,, \ \varepsilon^k = \frac{1}{k+1},$ & 
    Improved rate over EAG-V\\
    \cite{tl21} APV & $z^{k+1} = y^k\,, \ \eta^k = \theta^k$ as in \eqref{eq:step-size_apv}, $\varepsilon^k = \frac{1}{k+2},$ & \makecell[l]{One operator evaluation,\\ additional variable}\\
    \bottomrule
\end{tabular}
\caption{Summary of Extra-Anchored-Gradient and Popov's-type methods}
\label{tab:eag_type_methods}
\end{table}

The Tikhonov acceleration mechanism not only applies to enhance the worst-case performance of fixed-point iterations of nonexpansive operators, but also for Korpelevich's Extra-Gradient \cite{korpelevich1976} (EG) and Popov's \cite{popov1980} type methods for equations governed by monotone and $L$-Lipschitz continuous operators. Although the literature on Halpern methods is quite rich, this setting --- which results from \emph{explicit} discretizations of \eqref{eq:tychonov_flow} --- has been significantly less explored, with only a few contributions that we list in Table \ref{tab:eag_type_methods}. In particular, the family of EG-type methods encompasses the Extra-Anchored-Gradient (EAG) method with constant and non-constant step sizes (EAG-C) and (EAG-V), respectively, introduced by Yoon and Ryu in \cite{yr21}, and the Fast Extra-Gradient (FEG) method introduced by Lee and Kim in \cite{lk21}. The main drawback of EG-type schemes is the double evaluation of the operator, on both $x^k$ and on $y^{k+1}$. If storing one additional variable is possible, an alternative method is the Anchored Popov's (APV) method introduced in the working paper \cite{tl21}. In contrast to EAG and FEG, for APV one has $z^{k+1} = y^k$, hence requiring only one evaluation of $M$, but with one additional variable to store per iteration. All these methods exhibit the rate $\|M(x^k)\|=O(k^{-1})$ as $k \to +\infty$, thereby improving the $o(k^{-\frac{1}{2}})$ convergence rate for EG, which was established with computer-assisted proofs only for $\theta \in \big(0, \frac{1}{\sqrt{2}L}\big]$ by Gorbunov, Loizou and Gidel in \cite{glg22}. We anticipate that as a byproduct of our analysis, we can extend the proof to $\theta \in \big(0, \frac{1}{L}\big)$, cf.~Remark \ref{rmk:eg}. The fact that these methods share the same convergence rate of $O(k^{-1})$ as $k \to +\infty$ is not surprising, as they are all \emph{almost} equivalent to the Halpern method (applied to the resolvent of $M$), see \cite{yr22}.

In this paper, we further leverage the connection between Tikhonov regularization and anchored methods to design a new Extra-Anchored-Gradient type method for a general sequence of parameters $(\varepsilon^k)_k$. The proposed method reads as in Algorithm \ref{alg:gfeg}. In line with \cite{ss17, bk24}, we show that if \eqref{eq:intro_parameters_discrete}, 
\begin{equation}\label{eq:intro_epsdot_over_eps}
	\lim_{k\to +\infty} \frac{|\varepsilon^{k+1} - \varepsilon^k|}{\varepsilon^k} = 0\, \quad \text{as} \quad k \to +\infty\,,
\end{equation}
and $\theta \in \big(0, \frac{1}{L}\big)$, this method generates a sequence $(x^k)_k$ that converges strongly to the projection of $v$ onto the solution set of \eqref{eq:intro_monotone_inclusion} with a rate depending on the sequence $(\varepsilon^k)_k$. We show that with particular choices of $(\varepsilon^k)_k$ such as $\varepsilon^k = \frac{\alpha}{\theta(k + \beta)}$ with $\alpha > 1$ and $\beta> 0$, the rate particularizes to $O(k^{-1})$ as $k \to +\infty$, and discuss several other possible choices. We obtain the new method as an explicit discretization of \eqref{eq:tychonov_flow} and follow the continuous time approach to establish strong convergence of the iterates and derive convergence rates. A notable feature of our approach is that we obtain strong convergence of $(x^{k})_k$ without relying on the Halpern iteration as proposed for EAG and FEG in \cite{yr22}. Eventually, we demonstrate the practical efficiency of Algorithm \ref{alg:gfeg} with several numerical examples. These illustrate in particular that Algorithm \ref{alg:gfeg} can outperform other variants such as EAG, FEG and APV.

\begin{algorithm}[t]
	\caption{Generalized Extra-Anchored-Gradient method (G-EAG)} 
	\label{alg:gfeg}
	Pick $x^0 \in H$, $v \in H$, $\theta \in \big(0, \frac{1}{L}\big)$, $(\varepsilon^k)_k$ satisfying \eqref{eq:intro_parameters_discrete} and \eqref{eq:intro_epsdot_over_eps}, and\\
	\For{$k =0,1,\dots$}{
		\begin{equation*}
			\left\{\begin{aligned}
				&y^{k+1} := x^k -  \theta \varepsilon^k  (x^k - v) - \theta M(x^k) \\
				&x^{k+1} := x^k - \theta \varepsilon^{k+1}(x^{k+1}-v) - \theta M(y^{k+1}) 
			\end{aligned}\right.
	\end{equation*}}\vspace{0.2cm}
    %\hrule \vspace{0.2cm} 
    {\footnotesize \textbf{Note:} The method is indeed explicit --- bring $x^{k+1}$ on the left hand-side and divide by $1 + \theta \varepsilon^{k+1}$. \vspace{0.1cm}}
\end{algorithm}

\subsection{Contribution}\label{subsec11}
For reader's convenience, we summarize our contribution below:
\begin{itemize}
    \item We introduce Algorithm \ref{alg:gfeg} and show that if $(\varepsilon^k)_k$ satisfies \eqref{eq:intro_parameters_discrete} and \eqref{eq:intro_epsdot_over_eps}, then $(x^k)_k$ converges strongly to the projection of $v$ onto $\zer M$ with a rate depending on $(\varepsilon^k)_k$, cf.~Theorem \ref{thm:convergence_feg}.
    \item As a by-product of our analysis, we show that EG converges with a convergence rate of $o(k^{-\frac{1}{2}})$ as $k \rightarrow +\infty$ for all $\theta \in \big(0, \frac{1}{L}\big)$, complementing a result in \cite{glg22}, cf.~Remark \ref{rmk:eg}.
    \item We show that if $\varepsilon^k:=\frac{\alpha}{\theta(k + \beta)}$ with $\alpha > 1$ and $\beta > 0$ for all $k \geq 0$, the convergence rate particularizes to $O(k^{-1})$ as $k \rightarrow +\infty$, thus improving the $o(k^{-\frac{1}{2}})$ rate of EG and Popov's methods, cf.~Corollary \ref{cor:rate_linear}.
    \item We present numerical experiments that demonstrate the competitiveness or superiority of Algorithm \ref{alg:gfeg} over anchored EG and Popov's type methods. We further promote some feasible choices that enhance the convergence behaviour of the method.
\end{itemize}

\subsection{Background and Preliminaries}\label{subsec12}
We say that the operator $M\colon H\to H$ is $L$-Lipschitz continuous, where  $L \geq 0$, if
\begin{equation}
    \|M(x) - M(x')\| \leq L \|x-x'\| \, \quad \text{for all} \ x, x'\in H.
\end{equation}
We further suppose that $M$ is \emph{monotone}. A multi-valued operator $M\colon H\to 2^{H}$ is said to be monotone if
\begin{equation}
    \bra m - m', x - x' \ket \geq 0\, \quad \text{for all} \ (x, m)\,, (x', m')\in \gra M\,,
\end{equation}
where the set $\gra M\subseteq H\times H$ is the \emph{graph} of $M$ and is defined by $\gra M:=\{(x, m) \in H \times H \mid m \in M(x)\}$. A monotone operator is called \emph{maximal monotone} if its graph is maximal with respect to the inclusion of sets among the class of monotone operators. A single-valued monotone and Lipschitz continuous operator is also maximal monotone \cite[Corollary 20.28]{BCombettes}. In particular, $M$ is strong-to-weak continuous \cite[Corollary 21.21]{BCombettes}. We refer to \cite{BCombettes} for a comprehensive introduction to the theory of monotone operators in Hilbert spaces.

For a monotone and Lipschitz continuous operator $M\colon H\to H$, we are interested in finding a \emph{zero} of $M$, i.e.~an element in the set $\zer M :=\{ x \in H \mid M(x)=0\}$, which is sometimes referred to as a \emph{root} of $M$. This problem is ubiquitous in applications. If $M$ is allowed to be multi-valued, this encompasses from classical first-order optimality conditions for convex minimization problems (Fermat's principle) to generalized conditions of optimality for sums of non-smooth convex functions \cite{Chambolle2011, bredies2022graph}. Here, we are mainly interested in minimax games
\begin{equation}\label{eq:minmaxgame}
    \min_{x \in H} \max_{y \in K} \ \mathcal{L}(x, y)\,,
\end{equation}
where $\cL \colon H\times K \to \R$ is called \emph{Lagrangian}, and $H, K$ are Hilbert spaces, a problem that is gaining increasing interest in machine learning due to its applications in Adversarial Training \cite{goodfellow14}. We say that $\cL$ is \emph{convex-concave} if $x\mapsto\cL(x, y)$ is convex for all $y\in K$ and $y\mapsto \cL(x, y)$ is concave for all $x \in H$. A \emph{saddle point} of $\cL$ is any couple $(x^*, y^*)\in H\times K$ such that $\cL(x^*, y)\leq \cL(x^*, y^*) \leq \cL(x, y^*)$ for all $(x, y) \in H\times K$. When $\cL$ is differentiable and convex-concave, saddle points of $\cL$ correspond to  zeros of the so-called \emph{saddle operator}
\begin{equation}
    \mathcal{G}_{\cL}\begin{pmatrix}
        x \\ y 
    \end{pmatrix} := \begin{pmatrix}
        \nabla_x \cL (x, y)\\
        -\nabla_y \cL(x, y)
    \end{pmatrix}\, \quad \text{for} \ x \in H\,, y\in K\,.
\end{equation}
The saddle operator $\cG_{\cL}$ is, in fact, monotone \cite{rockafellar1970}. It is $L$-Lipschitz continuous if $\cL$ is $L$\emph{-smooth}, i.e., $\nabla \cL$ is $L$-Lipschitz continuous. Therefore, minimax games with $L$-smooth Lagrangians fit our problem formulation \eqref{eq:intro_monotone_inclusion} with $M = \cG_{\cL}$.

\subsection{Continuous Time Analysis}

In this section, we briefly recall the continuous time analysis of the Tikhonov flow \eqref{eq:tychonov_flow} with the careful adaptation recently introduced by Bo\c t and Nguyen in \cite{bk24}, which allows establishing a fast convergence rate for the residual. Despite our main focus being on numerical algorithms, it is important to include this analysis in this paper, since our discrete-time approach is profoundly based on the continuous-time approach, which emerges as an excellent assistant to:
\begin{enumerate}
	\item Guide the convergence proof: Our Lyapunov analysis follows exactly the arguments in the continuous time approach, which is significantly simpler than its discrete time counterpart. We shall say that our approach is \emph{continuous time assisted}.
	\item Design converging algorithms: It will become clear in our analysis that certain design choices of our methods are dictated by the Lyapunov analysis itself.  Specifically, these are often established to control undesired terms that do not appear in the continuous time approach.
\end{enumerate}
Let $M\colon H\to H$ be a monotone and $L$-Lipschitz continuous operator with $\zer M \neq \emptyset$, and consider the system \eqref{eq:tychonov_flow}.  We assume that the regularization function $\varepsilon \colon [t_0, +\infty) \to (0,+\infty)$ is continuously differentiable. For every starting point, the system admits a unique locally absolutely continuous solution $t \mapsto x(t)$, therefore $t \mapsto M(x(t))$ is also locally absolutely continuous. We also observe that, without loss of generality, we shall assume $v =0$. The general case can then be retrieved with a suitable shift \cite[Subsection 2.3]{botkhoa2023}. Following the standard approach, we divide our analysis into two main sections. First, we study the functional $\varphi\colon [t_0, +\infty)\to \R$ defined by
\begin{equation}\label{eq:def_phi}
	\varphi(t) := \frac{1}{2}\|x(t) - x^*\|^2 \quad \text{for} \ t\geq t_0 \ \mbox{and} \ x^* \in \zer M\,.
\end{equation}
Using classical techniques, for example, \cite{cps08}, we establish in Lemma \ref{lem:continuous_phi} a fundamental result that allows us to prove that, whenever $x^*$ is the minimum norm solution of \eqref{eq:intro_monotone_inclusion}, then $\varphi(t)\to 0$ for $t\to +\infty$, provided that, for example, $M(x(t))\to 0$ as $t\to +\infty$. Specifically, this will be a consequence of the vanishing (with a convergence rate) of the second main functional, we study in this paper, $\psi\colon [t_0,+\infty)\to \R$ defined by
\begin{equation}\label{eq:def_psi}
	\psi(t):= \frac{1}{2}\|M(x(t)) + \varepsilon(t)x(t)\|^2 \quad \text{for} \ t \geq t_0\,.
\end{equation}
In particular, following \cite{bk24}, we establish in Lemma \ref{lem:continuous_psi} a fundamental inequality that allows us to obtain rates for $\psi$ expressed in terms of the regularization parameter function $\varepsilon$, including $\psi(t)=O(t^{-2})$ as $t \to +\infty$, for $\varepsilon(t)=\frac{\alpha}{t}$ for all $t \geq t_0$, and $\alpha > 1$.

In the following result, we reproduce the argument presented, e.g., in \cite{cps08}:

\begin{lemma}[Proposition 6 in \cite{cps08}]\label{lem:continuous_phi}
	 Let $x\colon [t_0, +\infty)\to H$ be a solution trajectory to \eqref{eq:tychonov_flow} and $x^*\in \zer M$. Then,
	\begin{equation}\label{eq:ineqlemma1}
		\dot \varphi(t) + \varepsilon(t) \varphi(t) \leq \varepsilon(t)\frac{1}{2} \Big(\|x^*\|^2 -\|x(t)\|^2 \Big)\, \quad \text{for a.e.} \ t \geq t_0\,.
	\end{equation}
\end{lemma}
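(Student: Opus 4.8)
The plan is to differentiate $\varphi(t)=\tfrac12\|x(t)-x^*\|^2$ and use the flow equation together with monotonicity of $M$. First I would compute
\[
\dot\varphi(t)=\bra \dot x(t),\,x(t)-x^*\ket
=-\bra M(x(t))+\varepsilon(t)x(t),\,x(t)-x^*\ket,
\]
using \eqref{eq:tychonov_flow} (recall $v=0$). Next I would split the inner product: the term $-\bra M(x(t)),\,x(t)-x^*\ket$ is handled by monotonicity of $M$, which gives $\bra M(x(t))-M(x^*),\,x(t)-x^*\ket\geq 0$, and since $x^*\in\zer M$ we have $M(x^*)=0$, so $-\bra M(x(t)),\,x(t)-x^*\ket\leq 0$. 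This leaves
\[
\dot\varphi(t)\leq -\varepsilon(t)\bra x(t),\,x(t)-x^*\ket .
\]

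The remaining step is purely algebraic: I would rewrite $\bra x(t),x(t)-x^*\ket$ to expose $\varphi(t)$. Using the polarization-type identity
\[
\bra x,\,x-x^*\ket=\tfrac12\|x\|^2-\tfrac12\|x^*\|^2+\tfrac12\|x-x^*\|^2
=\tfrac12\|x\|^2-\tfrac12\|x^*\|^2+\varphi(t),
\]
(which follows from $\|x-x^*\|^2=\|x\|^2-2\bra x,x^*\ket+\|x^*\|^2$), multiplying by $-\varepsilon(t)$ and rearranging yields exactly
\[
\dot\varphi(t)+\varepsilon(t)\varphi(t)\leq \varepsilon(t)\tfrac12\bigl(\|x^*\|^2-\|x(t)\|^2\bigr)
\]
for a.e.\ $t\geq t_0$, which is \eqref{eq:ineqlemma1}.

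There is essentially no hard part here; the only point requiring a word of care is the differentiability of $\varphi$. Since the trajectory $t\mapsto x(t)$ is locally absolutely continuous, so is $t\mapsto\varphi(t)$, and the chain rule $\dot\varphi(t)=\bra\dot x(t),x(t)-x^*\ket$ holds for a.e.\ $t$; at such points the flow equation may be invoked pointwise, which is why the conclusion is stated "for a.e.\ $t\geq t_0$". Everything else is a direct computation, so I expect no real obstacle.
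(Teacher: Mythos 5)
Your proposal is correct and follows essentially the same route as the paper: differentiate $\varphi$, substitute the flow equation, drop the $-\bra M(x(t)), x(t)-x^*\ket$ term by monotonicity (with $M(x^*)=0$), and expand $-\varepsilon(t)\bra x(t), x(t)-x^*\ket$ into $\varepsilon(t)\tfrac12(\|x^*\|^2-\|x(t)\|^2)-\varepsilon(t)\varphi(t)$. The only difference is that you spell out the polarization identity and the a.e.\ chain-rule justification, which the paper leaves implicit.
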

\begin{proof}
Differentiating $\varphi$ in time we get for a.e.~$t \geq t_0$
\begin{equation}
	\begin{aligned}
		\dot \varphi(t) &= \bra  \dot x(t), x(t) - x^* \ket  = -\bra  M(x(t)) +\varepsilon(t) x(t), x(t) - x^* \ket \\
		& \leq -\varepsilon(t)\bra  x(t), x(t)-x^* \ket  = \varepsilon(t)\frac{1}{2}\Big(\|x^*\|^2 - \|x(t)\|^2\Big)  - \varepsilon(t) \varphi(t)\,,   
	\end{aligned}
\end{equation}
which yields the claim.
\end{proof}

The second result studies the functional $\psi$ defined in \eqref{eq:def_psi} with the adaptation proposed in \cite{bk24}. 

\begin{lemma}[Theorem 2.5 in \cite{bk24}]\label{lem:continuous_psi}
	 Let $x\colon [t_0, +\infty)\to H$ be a solution trajectory to \eqref{eq:tychonov_flow}. Suppose that $t\mapsto x(t)$ is uniformly bounded on $[t_0, +\infty)$. Then, for each $\delta \in (0, 1)$, there exists $C_\delta\geq 0$ such that
	\begin{equation}\label{eq:cont_psi_descent_claim}
		\dot \psi(t) + 2(1-\delta)\varepsilon(t) \psi(t) \leq C_\delta \varepsilon(t) 
 \left(\frac{\dot\varepsilon(t)}{\varepsilon(t)}\right)^2 \, \quad \text{for a.e.} \ t \geq t_0\,.
	\end{equation}
\end{lemma}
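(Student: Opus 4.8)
The plan is to imitate the computation in Lemma~\ref{lem:continuous_phi}, but now for $\psi$, exploiting the simple observation that along the flow the vector inside the norm defining $\psi$ is exactly $-\dot x(t)$. Write $g(t):= M(x(t)) + \varepsilon(t)x(t) = -\dot x(t)$, so that $\psi(t) = \tfrac12\|g(t)\|^2$. Since $t\mapsto x(t)$ is locally absolutely continuous and $M$ is Lipschitz, $t\mapsto M(x(t))$ and hence $g$ are locally absolutely continuous, so $\dot\psi(t) = \bra \dot g(t), g(t)\ket$ for a.e.\ $t\geq t_0$, with $\dot g(t) = \tfrac{d}{dt}M(x(t)) + \dot\varepsilon(t)x(t) + \varepsilon(t)\dot x(t)$.

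First I would record the monotonicity ingredient: for a.e.\ $t\geq t_0$ one has $\bra \tfrac{d}{dt}M(x(t)), \dot x(t)\ket \geq 0$. This follows by letting $h\to 0^+$ in the inequality $\tfrac{1}{h^2}\bra M(x(t+h)) - M(x(t)), x(t+h)-x(t)\ket \geq 0$ — valid by monotonicity of $M$ — at every point where both difference quotients converge, which is a.e.\ since both $x$ and $M\circ x$ are differentiable a.e. Consequently $\bra \tfrac{d}{dt}M(x(t)), g(t)\ket = -\bra \tfrac{d}{dt}M(x(t)), \dot x(t)\ket \leq 0$, and since $\bra \varepsilon(t)\dot x(t), g(t)\ket = -\varepsilon(t)\|g(t)\|^2 = -2\varepsilon(t)\psi(t)$, collecting terms gives
\[
\dot\psi(t) + 2\varepsilon(t)\psi(t) \leq \dot\varepsilon(t)\bra x(t), g(t)\ket \qquad \text{for a.e.\ } t\geq t_0.
\]

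It remains to absorb the cross term. Using the uniform bound $R:=\sup_{t\geq t_0}\|x(t)\| < +\infty$, Cauchy--Schwarz gives $\dot\varepsilon(t)\bra x(t), g(t)\ket \leq |\dot\varepsilon(t)|\,R\,\|g(t)\|$, and a Young inequality with the split $|\dot\varepsilon(t)|R\|g(t)\| = \big(|\dot\varepsilon(t)|R/\sqrt{\varepsilon(t)}\big)\cdot\big(\sqrt{\varepsilon(t)}\|g(t)\|\big)$ yields, for any $\eta>0$,
\[
\dot\varepsilon(t)\bra x(t), g(t)\ket \leq \frac{R^2}{2\eta}\,\frac{\dot\varepsilon(t)^2}{\varepsilon(t)} + \eta\,\varepsilon(t)\psi(t).
\]
Choosing $\eta = 2\delta$ and rearranging, and using $\dot\varepsilon(t)^2/\varepsilon(t) = \varepsilon(t)\big(\dot\varepsilon(t)/\varepsilon(t)\big)^2$, gives the claim with $C_\delta = R^2/(4\delta)$.

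I do not expect a genuine obstacle. The only slightly delicate points are the a.e.\ differentiability bookkeeping together with the justification that $\bra \tfrac{d}{dt}M(x(t)), \dot x(t)\ket \geq 0$ a.e., and the conceptual point that — in contrast to the unregularized estimate of Lemma~\ref{lem:continuous_phi} — one cannot obtain a clean Gr\"onwall-type descent for $\psi$: the term $\dot\varepsilon(t)\bra x(t), g(t)\ket$ is unavoidable and must be absorbed at the price of shrinking the coefficient $2\varepsilon(t)$ to $2(1-\delta)\varepsilon(t)$ and paying the error $C_\delta\varepsilon(t)\big(\dot\varepsilon(t)/\varepsilon(t)\big)^2$. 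The uniform boundedness hypothesis on the trajectory is exactly what renders $C_\delta$ (through $R$) finite.
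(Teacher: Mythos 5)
Your proof is correct and follows essentially the same route as the paper: differentiate $\psi$, drop the term $\bra \tfrac{d}{dt}M(x(t)),\dot x(t)\ket\ge 0$ by monotonicity, and absorb the cross term $\dot\varepsilon(t)\bra x(t),\dot x(t)\ket$ via Young's inequality using the uniform bound on the trajectory, arriving at the same constant $C_\delta=\tfrac{1}{4\delta}\sup_{t\ge t_0}\|x(t)\|^2$. Writing everything in terms of $g=-\dot x$ and spelling out the a.e.\ justification of the monotonicity inequality are only cosmetic differences from the paper's argument.
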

\begin{proof}
	Observe first that $\psi(t) = \frac{1}{2}\|\dot x(t)\|^2$ for a.e.~$t \geq t_0$. Take the time derivative of $\psi$ to get for a.e.~$t \geq t_0$
	\begin{equation}\label{eq:cont_descent_psi_1}
		\begin{aligned}
			\dot \psi(t) & = \bra \ddot x(t), \dot x(t) \ket  = -\bra  \frac{d}{dt}M(x(t)) + \varepsilon(t) \dot x(t) + \dot \varepsilon(t) x(t) , \dot x(t) \ket  \\
			& =  -\bra  \frac{d}{dt}M(x(t)), \dot x(t) \ket   - 2 \varepsilon(t)\psi(t) + \dot \varepsilon(t) \bra x(t), \dot x(t)\ket\,.
		\end{aligned}
	\end{equation}
        We now apply the Young inequality on the last term in the right hand-side of \eqref{eq:cont_descent_psi_1} to bound it as follows
        \begin{equation}\label{eq:young_inequality_xx}
            \dot \varepsilon(t) \bra x(t), \dot x(t) \ket  \leq \varepsilon(t) \left(\frac{\dot\varepsilon(t)}{\varepsilon(t)}\right)^2\frac{1}{4\delta}\|x(t)\|^2 + 2\delta \varepsilon(t) \psi(t)\, \quad \text{for a.e.} \ t \geq t_0\,.
        \end{equation}
  To conclude, we use the monotonicity of $M$, which gives $\bra \frac{d}{dt}M(x(t)), \dot x(t)\ket \geq 0$ for a.e.~$t \geq t_0$, and the boundedness of the trajectory, to obtain \eqref{eq:cont_psi_descent_claim} with $C_\delta :=\frac{1}{4\delta} \sup_{t \geq t_0}{\|x(t)\|^2} \geq 0$.
\end{proof}

These two results together with the Gronwall inequality lead us to the following statement.

\begin{theorem}\label{thm:convergence_cont} Let $x$ be a solution trajectory to \eqref{eq:tychonov_flow} with $\varepsilon\colon [t_0, +\infty)\to (0,+\infty)$ continuously differentiable such that
	\begin{equation}\label{eq:continuous_epsilon}
		\lim_{t\to +\infty}\varepsilon(t) = 0\,, \quad \int_{t_0}^{+\infty}\varepsilon(s)ds = +\infty\,, \quad  \text{and}  \quad \displaystyle \lim_{t \to +\infty}\frac{|\dot{\varepsilon}(t)|}{\varepsilon(t)} = 0\,.
	\end{equation}
	Then, $x(t)$ converges strongly to the minimum norm solution to \eqref{eq:intro_monotone_inclusion} as $t \rightarrow +\infty$, and the convergence rate of the residual results from
	\begin{equation}\label{eq:rate_psi_abstract}
		\psi(t)\leq \frac{ \psi(t_0)}{\gamma(t) } + \frac{C_\delta}{\gamma(t) } \int_{t_0}^t \gamma(s) \varepsilon(s) \left(\frac{\dot\varepsilon(t)}{\varepsilon(t)}\right)^2 ds\, \quad \text{for all} \  t\geq t_0\,,
	\end{equation}
    and any $\delta \in (0,1)$, where $\gamma(t):= \exp\Big( 2(1-\delta) \int_{t_0}^{t}\varepsilon(s)ds\Big)$ and $C_\delta:=\frac{1}{4\delta} \sup_{t \geq t_0}{\|x(t)\|^2} \geq 0$.
\end{theorem}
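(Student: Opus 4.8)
The plan is to exploit the two differential inequalities of Lemmas~\ref{lem:continuous_phi} and~\ref{lem:continuous_psi}, feeding each of them into an integrating-factor (Gronwall) argument, in the following order. Throughout we take $v=0$, as already reduced. The very first step is to upgrade Lemma~\ref{lem:continuous_phi} into \emph{uniform boundedness} of the trajectory, which is the hypothesis required to activate Lemma~\ref{lem:continuous_psi}: fixing any $x^\ast\in\zer M$ and discarding the nonpositive term $-\|x(t)\|^2$ in \eqref{eq:ineqlemma1} gives $\dot\varphi(t)+\varepsilon(t)\varphi(t)\le\tfrac12\varepsilon(t)\|x^\ast\|^2$ for a.e.\ $t\ge t_0$; multiplying by the integrating factor $\mu(t):=\exp\big(\int_{t_0}^t\varepsilon\big)$ makes the left-hand side the exact derivative $\tfrac{d}{dt}(\mu\varphi)$, and integrating (using $\mu\ge\mu(t_0)=1$) yields $\varphi(t)\le\varphi(t_0)+\tfrac12\|x^\ast\|^2$ for all $t\ge t_0$, so $\sup_{t\ge t_0}\|x(t)\|<+\infty$. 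In particular $C_\delta$ of Lemma~\ref{lem:continuous_psi} is finite for every $\delta\in(0,1)$.

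Second, I would derive the rate \eqref{eq:rate_psi_abstract} and deduce $\psi(t)\to0$. Multiplying \eqref{eq:cont_psi_descent_claim} by $\gamma(t)=\exp\big(2(1-\delta)\int_{t_0}^t\varepsilon\big)$, the left-hand side equals $\tfrac{d}{dt}(\gamma\psi)$ since $\dot\gamma=2(1-\delta)\varepsilon\gamma$; integrating from $t_0$ to $t$ and dividing by $\gamma(t)$ gives exactly \eqref{eq:rate_psi_abstract}. To see that the right-hand side of \eqref{eq:rate_psi_abstract} vanishes, note that $\gamma(t)\to+\infty$ because $\int_{t_0}^{+\infty}\varepsilon=+\infty$, which kills the first summand; for the second, fix $\eta>0$ and use $|\dot\varepsilon|/\varepsilon\to0$ to pick $T\ge t_0$ with $(\dot\varepsilon/\varepsilon)^2\le\eta$ on $[T,+\infty)$, then split $\int_{t_0}^t=\int_{t_0}^T+\int_T^t$: the first piece is a fixed finite constant over $\gamma(t)\to+\infty$, while the second is at most $\tfrac{C_\delta\eta}{\gamma(t)}\int_T^t\gamma\varepsilon=\tfrac{C_\delta\eta}{2(1-\delta)}\big(1-\gamma(T)/\gamma(t)\big)\le\tfrac{C_\delta\eta}{2(1-\delta)}$, using $\gamma\varepsilon=\dot\gamma/(2(1-\delta))$. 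Letting $\eta\downarrow0$ gives $\psi(t)\to0$, and since $\psi(t)=\tfrac12\|M(x(t))+\varepsilon(t)x(t)\|^2$ with $\varepsilon(t)x(t)\to0$ (by boundedness and $\varepsilon(t)\to0$), we obtain $M(x(t))\to0$ strongly.

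Third, I would turn this into strong convergence of $x(t)$ to the minimum-norm solution $x^\ast:=P_{\zer M}(0)$ (well defined, $\zer M$ being nonempty, closed and convex) via Lemma~\ref{lem:continuous_phi} again. Rewriting \eqref{eq:ineqlemma1} with this $x^\ast$ and using $\|x^\ast\|^2-\|x(t)\|^2=-2\varphi(t)-2\langle x(t)-x^\ast,x^\ast\rangle$ turns it into $\dot\varphi(t)+2\varepsilon(t)\varphi(t)\le-\varepsilon(t)\langle x(t)-x^\ast,x^\ast\rangle$ for a.e.\ $t$. By boundedness the trajectory has weak sequential cluster points as $t\to+\infty$, and each such $\bar x$ satisfies $M(\bar x)=0$: indeed $M(x(t))\to0$ strongly and, by maximal monotonicity of $M$, the graph $\gra M$ is weakly--strongly sequentially closed. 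The variational characterization of the projection then gives $\langle x^\ast,\bar x-x^\ast\rangle\ge0$ for every cluster point, whence $\liminf_{t\to+\infty}\langle x(t)-x^\ast,x^\ast\rangle\ge0$ by the usual contradiction-via-subsequence argument. Consequently, for any $\eta>0$ there is $T$ with $-\langle x(t)-x^\ast,x^\ast\rangle\le\eta$ on $[T,+\infty)$, and the same integrating-factor estimate as in the first step (now with factor $\exp(2\int_T^t\varepsilon)\to+\infty$) gives $\limsup_{t\to+\infty}\varphi(t)\le\eta/2$; letting $\eta\downarrow0$ yields $\varphi(t)\to0$, i.e.\ $x(t)\to x^\ast$ strongly.

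The only step that is not a mechanical Gronwall iteration is the last one: coupling the qualitative fact $M(x(t))\to0$ (which needs weak--strong closedness of $\gra M$, hence maximal monotonicity of $M$) with the projection inequality and a $\liminf$ subsequence argument to neutralize the sign term $\langle x(t)-x^\ast,x^\ast\rangle$ in \eqref{eq:ineqlemma1}. This is where the three standing assumptions on $\varepsilon$ in \eqref{eq:continuous_epsilon} and the monotonicity of $M$ genuinely interact; by contrast, the boundedness step and the rate computation use only $\int\varepsilon=+\infty$ and $|\dot\varepsilon|/\varepsilon\to0$ through elementary estimates, so I expect no real difficulty there.
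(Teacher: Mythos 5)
Your proof is correct and follows essentially the same route as the paper: Lemma~\ref{lem:continuous_phi} plus an integrating-factor (Gronwall) argument for uniform boundedness, Lemma~\ref{lem:continuous_psi} plus Gronwall for the rate \eqref{eq:rate_psi_abstract} and the vanishing of $M(x(t))$, and then a weak-cluster-point argument combined with \eqref{eq:ineqlemma1} to conclude strong convergence. The only cosmetic difference is in the final step, where the paper bounds $\limsup_{t\to+\infty}\big(\|x^*\|^2-\|x(t)\|^2\big)$ via weak lower semicontinuity of the norm and the minimality of $\|x^*\|$, whereas you use the variational characterization of the projection on the linear term $\langle x(t)-x^*,x^*\rangle$; the two are interchangeable here.
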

\begin{proof}
From Lemma \ref{lem:continuous_phi} and Lemma \ref{lem:limsup_continuous} we deduce that $t \mapsto x(t)$ is uniformly bounded on $[t_0, +\infty)$. Thus, Lemma \ref{lem:continuous_psi} applies, and, coupled with Lemma \ref{lem:limsup_continuous}, yields \eqref{eq:rate_psi_abstract}. Owing to the hypotheses in \eqref{eq:continuous_epsilon} and Lemma \ref{lem:limsup_continuous} this in turn implies $M(x(t))\to 0$ as $t\to +\infty$. This shows that all cluster points of $x(t)$ as $t\to +\infty$ lie in $\zer M$, due to the weak-to-strong closeness of the graph of $M$. Therefore, taking $x^*$ as the minimum norm solution, using the weak lower semicontinuity of the norm, we get, for $h(t):= \|x^*\|^2 - \|x(t)\|^2$, $\limsup_{t\to+\infty} h(t) = 0$. Therefore, applying Lemma \ref{lem:limsup_continuous} again, but this time for \eqref{eq:ineqlemma1}, we get $x(t)\to x^*$ strongly as $t \to +\infty$.
\end{proof}

The rate reduces to the expected $O(t^{-1})$ rate for choices of $\varepsilon$ such as $\varepsilon(t)=\frac{\alpha}{t}$ with $\alpha > 1$.

\begin{corollary}\label{cor:continuous_optimal_rate}
    Let $x$ be a solution trajectory to \eqref{eq:tychonov_flow} with $\varepsilon\colon [t_0, +\infty)\to (0,+\infty)$ defined by $\varepsilon(t)=\frac{\alpha}{t}$ and $\alpha > 0$. Then, $x(t)$ converges strongly to the minimum norm solution to \eqref{eq:intro_monotone_inclusion} as $t \rightarrow +\infty$, the residual showing the following convergence rate:
    \begin{enumerate}
        \item If $\alpha > 1$, then $\|M(x(t))\|=O(t^{-1})$ as $t \to +\infty$;\label{item:rate_alpha_1}
        \item If $\alpha \leq 1$, then $\|M(x(t))\|=O(t^{-\alpha(1-\delta)})$ for any $\delta \in (0, 1)$, as $t \to + \infty$.\label{item:rate_alpha_2}
    \end{enumerate}
\end{corollary}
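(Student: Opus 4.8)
The plan is to specialize Theorem~\ref{thm:convergence_cont} to $\varepsilon(t)=\frac{\alpha}{t}$ and then evaluate the resulting integral estimate explicitly. First I would verify that this $\varepsilon$ satisfies the three requirements in \eqref{eq:continuous_epsilon}: indeed $\varepsilon(t)\to 0$, $\int_{t_0}^{+\infty}\frac{\alpha}{s}\,ds=\alpha\ln(t/t_0)\to+\infty$, and $\frac{|\dot\varepsilon(t)|}{\varepsilon(t)}=\frac{\alpha/t^2}{\alpha/t}=\frac1t\to 0$. Hence Theorem~\ref{thm:convergence_cont} already yields the strong convergence of $x(t)$ to the minimum norm solution as well as the bound \eqref{eq:rate_psi_abstract}, which is what remains to be exploited quantitatively.

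Second, I would compute the two data of \eqref{eq:rate_psi_abstract} for this choice: $\gamma(t)=\exp\big(2(1-\delta)\alpha\ln(t/t_0)\big)=(t/t_0)^{2\alpha(1-\delta)}$ and $\varepsilon(s)\big(\dot\varepsilon(s)/\varepsilon(s)\big)^2=\alpha s^{-3}$. Substituting into \eqref{eq:rate_psi_abstract} gives
\begin{equation*}
\psi(t)\le \psi(t_0)\Big(\tfrac{t_0}{t}\Big)^{2\alpha(1-\delta)}+\frac{C_\delta\alpha}{t_0^{2\alpha(1-\delta)}}\,t^{-2\alpha(1-\delta)}\int_{t_0}^t s^{2\alpha(1-\delta)-3}\,ds .
\end{equation*}
The remaining step is an elementary case split on the sign of $2\alpha(1-\delta)-2$. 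If $2\alpha(1-\delta)>2$, then $\int_{t_0}^t s^{2\alpha(1-\delta)-3}ds=O(t^{2\alpha(1-\delta)-2})$, so the second term is $O(t^{-2})$ while the first is $o(t^{-2})$, whence $\psi(t)=O(t^{-2})$. If $2\alpha(1-\delta)<2$, then that integral converges and both terms are $O(t^{-2\alpha(1-\delta)})$, whence $\psi(t)=O(t^{-2\alpha(1-\delta)})$. (The borderline value $2\alpha(1-\delta)=2$ would merely introduce a logarithmic factor and is never needed; see below.)

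Finally, I would pass from $\psi$ to the residual. By \eqref{eq:def_psi} we have $\psi(t)=\tfrac12\|M(x(t))+\varepsilon(t)x(t)\|^2$, so the triangle inequality gives $\|M(x(t))\|\le\sqrt{2\psi(t)}+\varepsilon(t)\|x(t)\|$; since $t\mapsto x(t)$ is bounded (as established in the proof of Theorem~\ref{thm:convergence_cont}), $\varepsilon(t)\|x(t)\|=O(t^{-1})$. For part~\ref{item:rate_alpha_1}, with $\alpha>1$ I would pick $\delta\in(0,1)$ small enough that $2\alpha(1-\delta)>2$; then $\sqrt{2\psi(t)}=O(t^{-1})$ and therefore $\|M(x(t))\|=O(t^{-1})$. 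For part~\ref{item:rate_alpha_2}, with $\alpha\le 1$ every $\delta\in(0,1)$ gives $2\alpha(1-\delta)<2$ (so the borderline case never occurs), hence $\sqrt{2\psi(t)}=O(t^{-\alpha(1-\delta)})$; since then $\alpha(1-\delta)<1$, the $O(t^{-1})$ tail from $\varepsilon(t)\|x(t)\|$ is dominated, and $\|M(x(t))\|=O(t^{-\alpha(1-\delta)})$.

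There is no genuine obstacle here: the statement is essentially a direct corollary of Theorem~\ref{thm:convergence_cont}. The only points deserving a moment of care are the case analysis of the integral $\int_{t_0}^t s^{2\alpha(1-\delta)-3}\,ds$ (in particular choosing $\delta$ so as to avoid the borderline exponent when $\alpha>1$) and checking that, in each regime, the $O(t^{-1})$ contribution of the regularization term $\varepsilon(t)x(t)$ does not override the decay rate extracted from $\psi$.
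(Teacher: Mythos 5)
Your proposal is correct and follows essentially the same route as the paper: specialize the abstract bound \eqref{eq:rate_psi_abstract} of Theorem \ref{thm:convergence_cont} to $\varepsilon(t)=\frac{\alpha}{t}$, compute $\gamma(t)\sim t^{2\alpha(1-\delta)}$ and the integrand, split on the sign of $2\alpha(1-\delta)-2$, and pass to the residual via $\|M(x(t))\|\leq\sqrt{2\psi(t)}+\varepsilon(t)\|x(t)\|$ together with boundedness of the trajectory. Your write-up is in fact slightly more explicit than the paper's (verification of \eqref{eq:continuous_epsilon} and the evaluation of the integral), but there is no substantive difference.
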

\begin{proof}
	Let $\delta \in (0, 1)$ and consider the abstract rate \eqref{eq:rate_psi_abstract}. A simple calculation shows that for all $t \geq t_0$, we have $\gamma(t)=C t^{2\alpha(1-\delta)}$, for $C >0$, and $\left(\frac{\dot \varepsilon(s)}{\varepsilon(s)}\right)^2 = \frac{1}{s^2}$ for all $s \in [t_0, t]$ . Hence, \eqref{eq:rate_psi_abstract} turns into
    \begin{equation}
        \psi(t) = O\left( t^{-2 \alpha (1-\delta)}   + t^{-2}\right) \quad \text{as} \ t \to +\infty\,.
    \end{equation}
    Suppose that $\alpha > 1$. Then for $\delta \in (0,1)$ small enough $2 \alpha (1-\delta) > 2$, which yields $\psi(t) = O(t^{-2})$ as $t \to + \infty$. Now, since $x$ is bounded, $(2\psi(t))^\frac{1}{2} + O(t^{-1}) \geq \|M(x(t))\|$ as $t \to + \infty$, hence the item \ref{item:rate_alpha_1}. Using the same argument, if $\alpha \leq 1$ we obtain $\psi(t)=O(t^{-2\alpha(1-\delta)})$ as $t \to +\infty$ and hence $\|M(x(t))\| = O(t^{-\alpha (1-\delta)})$ as $t \to +\infty$.
\end{proof}

\begin{remark}[The case $\alpha = 1$]\label{rmk:alpha_one}
 The case $\alpha = 1$ can also be shown to give the rate $O(t^{-1})$ as $t\to +\infty$. This has been studied in \cite{yr21} with a somewhat more involved analysis and in \cite{bk24} in the context of a similar approach with general parameters, but requiring the additional assumption that $t\mapsto \frac{d}{dt}(\dot \varepsilon(t) + \varepsilon^2(t))$ preserves constant sign on $[t_0, +\infty)$. The latter follows naturally with a more careful analysis, namely, instead of applying the Young inequality to \eqref{eq:young_inequality_xx}, one keeps the term
 \begin{equation}
     \dot \varepsilon(t) \bra x(t), \dot x(t)\ket = \dot \varepsilon(t)\frac{d}{dt} \frac{1}{2}\|x(t)\|^2\,,
 \end{equation}
applies the Gronwall inequality as in the proof of Theorem \ref{thm:convergence_cont} and integrates by parts. However, this approach seems particularly difficult to reproduce in discrete time, so we decided to go for the simpler argument. The corollary \ref{cor:continuous_optimal_rate} thus shows that the asymptotic rate, at least for $\alpha > 1$, can be obtained even by removing the sign assumption considered in \cite{bk24}, but producing a suboptimal result for $\alpha = 1$. Note that this comes at an important price --- the constant $C_\delta$ in \eqref{eq:rate_psi_abstract} explodes as $\delta \to 0$.
\end{remark}

\section{A General Framework for the Extra-Anchored-Gradient method}\label{sec:gfeg}

In this section, we study the asymptotic behavior of the Algorithm \ref{alg:gfeg}. First, let us observe once again that without loss of generality, we shall assume $v =0$. The general case can then be retrieved with an appropriate shift. The iterative scheme produces for a starting point $x^0 \in H$ two sequences $(x^k)_k$ and $(y^{k+1})_k$ defined for all $k \geq 0$ by
\begin{equation}\label{eq:anchored_eag}
	\left\{\begin{aligned}
		&y^{k+1} := (1- \theta \varepsilon^k) x^k - \theta M(x^k)\,,\\
		&x^{k+1} =x^k - \theta \varepsilon^{k+1}x^{k+1} - \theta M(y^{k+1})\,,
	\end{aligned}\right.
\end{equation}
where $(\varepsilon^k)_k$ is a non-negative sequence of parameters satisfying:	\begin{equation}\label{eq:condition_for_eps}
\lim_{k\to +\infty} \varepsilon^k = 0\,, \quad \sum_{k=0}^{+\infty} \varepsilon^k = +\infty\,, \quad \text{and} \quad \lim_{k\to +\infty} \frac{|\varepsilon^{k+1}-\varepsilon^k|}{\varepsilon^k} = 0\,.
\end{equation}
The second identity in \eqref{eq:anchored_eag} can be written as
\begin{equation}\label{eq:update_for_x}
    \frac{\Delta x_k}{\theta}  + M(y^{k+1}) + \varepsilon^{k+1}x^{k+1} = 0 \quad \text{for all} \ k \geq 0\,,
\end{equation}
which shows that \eqref{eq:anchored_eag} can be understood as an explicit discretization of \eqref{eq:tychonov_flow}, $y^{k+1}$ being an extrapolation point. If $\varepsilon^k=0$ for all $k \geq 0$, the method reduces to EG \cite{korpelevich1976}. Note that, unlike Popov-type schemes, the variable $y^{k+1}$ is in fact only temporary, and thus the method only requires storing one variable $x^k$. Using the terminology of \cite{Ryu}, we shall say that the method has \emph{minimal lifting} and is not \emph{frugal}.  

Following the continuous time setting, we divide the analysis into two main parts. This time, we start with the analysis of the sequence $(\psi^k)_k$ defined by
\begin{equation}\label{eq:def_psi_discrete}
\psi^k:= \frac{1}{2}\|M(x^k) + \varepsilon^k x^k\|^2\, \quad \text{for all} \ k \geq 0\,,
\end{equation}
for which we provide in Lemma \ref{lem:rate_explicit} below an analogous result to the continuous time setting in Lemma \ref{lem:continuous_psi}. Then, we study the sequence $(\varphi^k)_k$ defined by
\begin{equation}\label{eq:def_phi_discrete}
	\varphi^k:= \frac{1}{2}\|x^k -x^*\|^2\, \quad \text{for all} \ k \geq 0 \ \text{and} \ x^* \in \zer M\,,
\end{equation}
for which we provide in Lemma \ref{lem:strong_convergence_explicit} below the discrete time analogous of Lemma \ref{lem:continuous_phi}. These two preliminary results are collected in Section \ref{sec:technical_results}.

\subsection{Technical Lemmas}\label{sec:technical_results}

To strengthen the connection with the continuous time approach and make the rationale of the proof of Lemma \ref{lem:rate_explicit} below easier to follow, we introduce two important simplifications. First, leveraging the analogy between finite differences and derivatives, given a sequence $(a^k)_k$ we denote $\Delta a_k := a^{k+1}-a^k$ for all $k \geq 0$. In this sense, it will be useful to recall some discrete calculus rules listed in Appendix \ref{sec:discrete_calculus}. Secondly, we introduce an auxiliary sequence $(z^k)_k$ defined by
\begin{equation}
    z^k :=M(x^k) + \varepsilon^k x^k \quad \text{for all} \ k \geq 0\,. 
\end{equation}
The sequences $(x^k)_k$, $(y^{k+1})_k$, and $(z^k)_k$ are related by the following fundamental relations, which, although unmotivated at first sight, arise naturally from our Lyapunov analysis
\begin{align}
	\label{eq:def_y} &\Delta x_k + \theta z^k = -\theta \Delta (\varepsilon x)_k + \theta^2 \varepsilon^k z^k + \theta N^k\,,\\
	\label{eq:def_m} \mbox{where} \quad & N^k := M(x^k) - \theta \varepsilon^k z^k - M(y^{k+1});\\
	\label{eq:def_xi} & y^{k+1} = x^k -\theta z^k\,.
\end{align}

\begin{lemma}[$\psi$-Analysis]\label{lem:rate_explicit}
    Let $(x^k)_k$ and $(y^{k+1})_k$ be generated by \eqref{eq:anchored_eag} with $\theta \in \big(0, \frac{1}{L}\big)$ and $(\varepsilon^k)_k$ satisfying \eqref{eq:condition_for_eps}. Suppose that $(x^k)_k$ is bounded. Then, for any $\delta \in (0,1)$, there exist $C_\delta \geq 0$ and $k_0\geq 0$ such that
    \begin{equation}\label{eq:decrease_psi_discrete}
    	\begin{aligned}
    		\Delta \psi_k + 2(1 -\delta)\theta\varepsilon^k \psi^k \leq  C_\delta \theta \varepsilon^k \left(\frac{\Delta \varepsilon_k}{\theta \varepsilon^k}\right)^2\, \quad \text{for all} \ k \geq k_0\,.
     	\end{aligned}
    \end{equation} 
\end{lemma}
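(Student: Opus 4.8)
The plan is to transcribe, essentially line by line, the proof of the continuous-time Lemma~\ref{lem:continuous_psi} into discrete time under the dictionary $\dot{(\cdot)}\leftrightarrow\Delta(\cdot)$ with step size $\theta$, and then to absorb the purely discrete error terms — those with no continuous counterpart — partly into the $\delta$-slack and partly into the right-hand side of~\eqref{eq:decrease_psi_discrete}. Since $z^k=M(x^k)+\varepsilon^kx^k$ and $\psi^k=\tfrac12\|z^k\|^2$ by~\eqref{eq:def_psi_discrete}, I would start from the exact discrete identity (cf.\ Appendix~\ref{sec:discrete_calculus})
\[\Delta\psi_k=\langle z^k,\Delta z_k\rangle+\tfrac12\|\Delta z_k\|^2,\]
the analogue of $\dot\psi=\langle z,\dot z\rangle$ up to the new quadratic correction $\tfrac12\|\Delta z_k\|^2$. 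Writing $\Delta z_k=\Delta M(x)_k+\Delta(\varepsilon x)_k$ with $\Delta M(x)_k=M(x^{k+1})-M(x^k)$, applying the discrete product rule $\Delta(\varepsilon x)_k=\varepsilon^{k+1}\Delta x_k+x^k\Delta\varepsilon_k$, and substituting $\Delta x_k=-\theta z^k-\theta\Delta(\varepsilon x)_k+\theta^2\varepsilon^kz^k+\theta N^k$ from~\eqref{eq:def_y}, the piece $\varepsilon^{k+1}\langle z^k,-\theta z^k\rangle=-2\theta\varepsilon^{k+1}\psi^k=-2\theta\varepsilon^k\psi^k-2\theta\Delta\varepsilon_k\psi^k$ supplies the main dissipation $-2\theta\varepsilon^k\psi^k$ (mirroring $-2\varepsilon\psi$); the piece $\Delta\varepsilon_k\langle x^k,z^k\rangle$ is the analogue of $\dot\varepsilon\langle x,\dot x\rangle$; the piece $\langle z^k,\Delta M(x)_k\rangle$ together with the $N^k$-contributions furnishes the monotonicity terms; everything else, including $\tfrac12\|\Delta z_k\|^2$, is a remainder.

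For the analogue of the Young step~\eqref{eq:young_inequality_xx} I would bound, using $\psi^k=\tfrac12\|z^k\|^2$,
\[\Delta\varepsilon_k\langle x^k,z^k\rangle\le\tfrac{1}{4\delta}\,\theta\varepsilon^k\Big(\tfrac{\Delta\varepsilon_k}{\theta\varepsilon^k}\Big)^2\|x^k\|^2+2\delta\theta\varepsilon^k\psi^k,\]
which downgrades $-2\theta\varepsilon^k\psi^k$ to $-2(1-\delta)\theta\varepsilon^k\psi^k$ and already contributes the right-hand side of~\eqref{eq:decrease_psi_discrete} with the constant $\tfrac{1}{4\delta}\sup_k\|x^k\|^2$, in perfect analogy with the $C_\delta$ of Lemma~\ref{lem:continuous_psi}. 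For the monotonicity, the relations~\eqref{eq:def_m}--\eqref{eq:def_xi} are engineered so that the surviving operator increments appear packaged as $M(x^k)-M(y^{k+1})$, which by~\eqref{eq:def_xi} pairs with $x^k-y^{k+1}=\theta z^k$, and as $\Delta M(x)_k$, which pairs with $\Delta x_k$; by monotonicity of $M$ both pairings are nonnegative, which is exactly why $N^k$ in~\eqref{eq:def_m} is defined so as to isolate $M(x^k)-\theta\varepsilon^kz^k-M(y^{k+1})$.

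The substance of the proof — and where I expect the real difficulty — is the final bookkeeping. Using boundedness of $(x^k)_k$, hence of $(M(x^k))_k$ by Lipschitz continuity, hence of $(z^k)_k$ since $\varepsilon^k\to0$, hence $\sup_k\psi^k<\infty$, together with $\varepsilon^k\to0$ and $\tfrac{|\Delta\varepsilon_k|}{\varepsilon^k}\to0$ from~\eqref{eq:condition_for_eps}, the remainders that carry an explicit factor $\varepsilon^k$ or $\Delta\varepsilon_k$ are, for $k$ beyond some index $k_0$, absorbed into the slack $2\delta\theta\varepsilon^k\psi^k$ and into a multiple of $\theta\varepsilon^k\big(\tfrac{\Delta\varepsilon_k}{\theta\varepsilon^k}\big)^2$ (absorbed in turn by enlarging the constant to $C_\delta:=\tfrac{1}{4\delta}\sup_k\|x^k\|^2+\text{const}$). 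The genuinely delicate remainders are $\tfrac12\|\Delta z_k\|^2$ and the part of $N^k$ that is quadratic in $x^{k+1}-y^{k+1}$: these are of order $\theta^2L^2\psi^k$, i.e.\ a \emph{fixed} fraction of $\psi^k$ rather than a vanishing one, so they cannot be discarded into the $\delta$-slack. The decisive point is to arrange the expansion so that, instead of being estimated in isolation, they are combined with the two nonnegative monotonicity terms identified above into a single quantity of the favorable sign; this uses $\|M(x^{k+1})-M(y^{k+1})\|\le L\|x^{k+1}-y^{k+1}\|$ and is the one place where the \emph{strict} bound $\theta\in\big(0,\tfrac1L\big)$ is needed, through $1-\theta^2L^2>0$. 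Once this cancellation is in place, collecting the surviving terms yields~\eqref{eq:decrease_psi_discrete}; the continuous-time skeleton (discrete differentiation, Young, monotonicity) is a routine transcription of Lemma~\ref{lem:continuous_psi}, and it is this absorption-and-cancellation bookkeeping — in particular the verification that the $\theta^2L^2\psi^k$-order contributions genuinely cancel rather than merely being ``small'' — that carries the weight.
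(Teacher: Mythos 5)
Your plan follows the paper's proof almost step for step: the discrete chain rule with the $\tfrac12\|\Delta z_k\|^2$ correction, the Young inequality on $\Delta\varepsilon_k\langle x,z^k\rangle$ producing the right-hand side of \eqref{eq:decrease_psi_discrete} with a constant proportional to $\tfrac{1}{\delta}\sup_k\|x^k\|^2$, monotonicity to discard $\langle\Delta M_k,\Delta x_k\rangle$, and the absorption, for $k\geq k_0$, of all remainders carrying explicit factors $\varepsilon^k$ or $\Delta\varepsilon_k$. You also correctly single out the crux: the remainders of size comparable to $\psi^k$ (coming from $\tfrac12\|\Delta z_k\|^2$ and from the $N^k$-contributions) cannot be pushed into the $\delta$-slack and must genuinely cancel, via Lipschitz continuity and $1-(\theta L)^2>0$.

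However, that cancellation is precisely the step you announce but do not perform, and the mechanism you offer for it is not the one that works. The pairing of $M(x^k)-M(y^{k+1})$ with $x^k-y^{k+1}=\theta z^k$ via monotonicity plays no role in the paper's argument; monotonicity is used exactly once, for $\langle\Delta M_k,\Delta x_k\rangle\geq 0$. What actually produces the favorable sign is a completing-the-square identity: after substituting \eqref{eq:def_y}, the dangerous quadratic contribution takes the form $\tfrac12\|\Delta M_k+\theta\varepsilon^kz^k\|^2+\langle\Delta M_k+\theta\varepsilon^kz^k,N^k\rangle=\tfrac12\|\Delta M_k+\theta\varepsilon^kz^k+N^k\|^2-\tfrac12\|N^k\|^2$, and the definition \eqref{eq:def_m} of $N^k$ is engineered exactly so that $\Delta M_k+\theta\varepsilon^kz^k+N^k=M(x^{k+1})-M(y^{k+1})$. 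Lipschitz continuity then bounds the completed square by $\tfrac{L^2}{2}\|x^{k+1}-y^{k+1}\|^2$, and since $x^{k+1}-y^{k+1}=\theta N^k-\theta\Delta(\varepsilon x)_k+\theta^2\varepsilon^kz^k$, a Young inequality with margin $\delta_1$ satisfying $(\theta L)^2(1+\delta_1)<1$ leaves $\tfrac12\big((\theta L)^2(1+\delta_1)-1\big)\|N^k\|^2\leq 0$ plus terms quadratic in $\Delta(\varepsilon x)_k$ and $\theta\varepsilon^kz^k$, which are then absorbed as you describe (using \eqref{eq:control_deltay} to express $\Delta x_k$, hence $\Delta(\varepsilon x)_k$, in terms of $\Delta\varepsilon_kx^{k+1}$ and $\varepsilon^k$-sized quantities). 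Without exhibiting the negative square $-\tfrac12\|N^k\|^2$ generated by polarization, there is nothing for the $\tfrac{(\theta L)^2}{2}\|N^k\|^2$ term to cancel against, so the proof as sketched does not close; supplying this identity is the only missing ingredient, after which your bookkeeping plan goes through.
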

\begin{proof} Let $\delta \in (0,1)$. To shorten notation, we write $M^k:=M(x^k)$ for all $k \geq 0$. Recall that $z^k = M^k + \varepsilon^k x^k$ and hence $\psi^k = \frac{1}{2}\|z^k\|^2$. Differentiating, we get for all $k \geq 0$
    \begin{equation}
        \begin{aligned}
            \Delta \psi_k & = \bra \Delta M_{k} + \varepsilon^k \Delta x_k + \Delta \varepsilon_k x^{k+1}, z^k\ket + \frac{1}{2}\| \Delta z_k\|^2\\
            & = \bra \Delta M_{k}, z^k\ket - 2\varepsilon^k \theta \psi^k + \varepsilon^k \bra\Delta x_k + \theta z^k, z^k\ket + \Delta \varepsilon_k\bra  x^{k+1}, z^k\ket + \frac{1}{2}\| \Delta z_k\|^2\\
             & = -\frac{1}{\theta} \bra \Delta M_{k}, \Delta x_k \ket - 2\varepsilon^k \theta \psi^k  + \Delta \varepsilon_k\bra  x^{k+1}, z^k\ket\\
             & \quad +\underbrace{\frac{1}{\theta} \bra \Delta M_{k}, \Delta x_k + \theta z^k \ket + \varepsilon^k \bra\Delta x_k + \theta z^k, z^k\ket + \frac{1}{2}\| \Delta z_k\|^2}_{(\labterm{label:psi_I}{I})}\,.
        \end{aligned}
    \end{equation}
    We study \eqref{label:psi_I} separately. For $N^k$ satisfying the identity \eqref{eq:def_y}, we have for all $k \geq 0$ the following
    \begin{equation}\label{eq:psi_I_1}
        \begin{aligned}
            \eqref{label:psi_I} \ & := \frac{1}{\theta} \bra \Delta M_{k}, \Delta x_k + \theta z^k \ket + \varepsilon^k \bra\Delta x_k + \theta z^k, z^k\ket + \frac{1}{2}\| \Delta z_k\|^2\\
            & = \frac{1}{\theta} \bra \Delta M_{k} + \theta \varepsilon^k z^k, \Delta x_k + \theta z^k \ket  + \frac{1}{2}\| \Delta z_k\|^2\\
        & = -\bra \Delta M_{k} + \theta \varepsilon^k z^k,  \Delta (\varepsilon x)_k - \theta \varepsilon^k z^k - N^k \ket  + \frac{1}{2}\| \Delta z_k\|^2\\
        & = -\bra \Delta M_{k} + \theta \varepsilon^k z^k,  \Delta (\varepsilon x)_k - \theta \varepsilon^k z^k\ket  + \frac{1}{2}\| \Delta z_k\|^2 + \bra \Delta M_k + \theta \varepsilon^kz^k , N^k \ket\\
         & =\underbrace{ \frac{1}{2}\|\Delta M_{k} + \theta \varepsilon^k z^k\|^2 + \bra \Delta M_k + \theta \varepsilon^k z^k, N^k \ket}_{(\labterm{label:psi_I_1}{I.1})} + \underbrace{\frac{1}{2}\| \Delta (\varepsilon x)_k - \theta \varepsilon^k z^k\|^2}_{(\labterm{label:psi_N1}{N.1})}\,,
    \end{aligned}
    \end{equation}
    where in the last line we have employed the polarization identity on the first inner product and the fact that $\Delta z_k = \Delta M_k + \Delta (\varepsilon x)_k$. Now we show that \eqref{label:psi_I_1} can be bounded from above picking $N^k$ as in \eqref{eq:def_m} and $y^{k+1}$ satisfying \eqref{eq:def_xi}. For all $k \geq 0$ we have 
    \begin{equation}
        \begin{aligned}
            \eqref{label:psi_I_1}&:=  \frac{1}{2}\|\Delta M_{k} + \theta \varepsilon^k z^k\|^2 + \bra \Delta M_k + \theta \varepsilon^k z^k, N^k \ket\\
            & =  \frac{1}{2}\|\Delta M_{k} + \theta \varepsilon^k z^k + N^k\|^2 - \frac{1}{2}\|N^k\|^2\\
            & = \frac{1}{2}\|M^{k+1} - M(y^{k+1})\|^2 - \frac{1}{2}\|N^k\|^2\\
            & \leq \frac{L^2}{2}\|x^{k+1}-y^{k+1}\|^2 - \frac{1}{2}\| N^k\|^2\\
            & = \frac{L^2}{2}\|x^k - \theta z^k -\theta \Delta (\varepsilon x)_k + \theta^2 \varepsilon^k z^k + \theta N^k- y^{k+1}\|^2 - \frac{1}{2}\|N^k\|^2\\
			& = \frac{L^2}{2}\|\theta N^k-\theta \Delta (\varepsilon x)_k + \theta^2 \varepsilon^k z^k\|^2 - \frac{1}{2}\|N^k\|^2\\
			& \leq \frac{1}{2}\left( (\theta L)^2(1+\delta_1) - 1\right)\| N^k\|^2 + \underbrace{\frac{L^2}{2}\left(1+ \frac{1}{\delta_1}\right)\|\theta \Delta (\varepsilon x)_k - \theta^2 \varepsilon^k z^k\|^2}_{(\labterm{label:psi_N2}{N.2})} \leq \eqref{label:psi_N2}\,,
		\end{aligned}
	\end{equation}
	for some $\delta_1 >0$ such that $1+\delta_1 < \frac{1}{(\theta L)^2}$, which exists since $\theta < \frac{1}{L}$. Let us denote by $(\labterm{label:psi_N}{N}) := \eqref{label:psi_N1} + \eqref{label:psi_N2}$. Putting everything together, using the monotonicity of $M$, and the Young inequality, we get for all $k \geq 0$
	\begin{equation}\label{eq:descent_psi_2}
		\begin{aligned}
			\Delta \psi_k  \leq&  - 2 \theta \varepsilon^k  \psi^k  + \Delta \varepsilon_k \bra  x^{k+1}, z^k \ket  + \eqref{label:psi_N}\\
			 \leq & -2 \theta \varepsilon^k  \psi^k  + \theta \varepsilon^k  \left(\frac{\Delta \varepsilon_k}{\theta \varepsilon^k}\right)^2 \frac{1}{2 \delta}\|x^{k+1}\|^2  + \theta \varepsilon^k  \frac{\delta}{2} \|z^k\|^2 + \eqref{label:psi_N}\\
            \leq & -(2 - \delta ) \theta \varepsilon^k  \psi^k  + \theta \varepsilon^k  C_1 \left(\frac{\Delta \varepsilon_k}{\theta \varepsilon^k}\right)^2 + \eqref{label:psi_N}\,.
		\end{aligned}
	\end{equation}
    where $C_1 := \frac{1}{2\delta} \sup_{k \geq 0} \|x^{k+1}\|^2$. Let us now deal with \eqref{label:psi_N}. Let $C_2 :=   1+ \theta^2L^2 + \frac{\theta^2L^2}{\delta_1}$ and set $k_0'>0$ such that $C_2 \theta \varepsilon^k \leq \frac{\delta}{4}$ for all $k \geq k_0'$, which exists since $\varepsilon^k\to 0$ as $k \to +\infty$ from \eqref{eq:condition_for_eps}. For all $k \geq k_0'$, we have:
    \begin{equation}
        \begin{aligned}
            \eqref{label:psi_N} & = \frac{1}{2} \left(1+ \theta^2L^2 + \frac{\theta^2L^2}{\delta_1} \right) \| \Delta (\varepsilon x)_k - \theta \varepsilon^k z^k\|^2\\
             & \leq  C_2 \| \Delta (\varepsilon x)_k\|^2 +  C_2 \theta^2 (\varepsilon^k)^2\|z^k\|^2 \leq C_2 \| \Delta (\varepsilon x)_k\|^2 +  \theta \varepsilon^k \frac{\delta}{2} \psi^k\,.
        \end{aligned}
    \end{equation}
    Observe now that $\Delta (\varepsilon x)_k = \Delta \varepsilon_k x^{k+1} + \varepsilon^k \Delta x_{k}$ and
    \begin{equation}\label{eq:control_deltay}
        \begin{aligned}
            \Delta x_{k} &= -\theta \varepsilon^{k+1} x^{k+1} - \theta M(y^{k+1})\\
            & = -\theta \varepsilon^{k+1} x^{k+1} - \theta (M(y^{k+1}) - M^k)- \theta M^k - \theta \varepsilon^k x^k +\theta \varepsilon^k x^k\\
            & = -\theta \Delta \varepsilon_k x^{k+1} - \theta (M(y^{k+1}) - M^k) - \theta z^k - \theta \varepsilon^k \Delta x_k\,,
        \end{aligned}
    \end{equation}
    hence, isolating $\Delta x_k$, we get for all $k \geq 0$
    \begin{equation}
        \Delta x_k =  \frac{1}{1 + \theta \varepsilon^k}\left(-\theta \Delta \varepsilon_k x^{k+1} - \theta (M(y^{k+1}) - M^k) - \theta z^k \right).
    \end{equation}
    Therefore, since $y^{k+1} - x^k = -\theta z^k$ from \eqref{eq:anchored_eag} and by using the Lipschitz property of $M$ and \eqref{eq:control_deltay}, there exist $C_3 >0$ and $k_0'' \geq 0$ such that for all $k \geq k_0''$
    \begin{equation}\label{eq:control_delta_ex}
        \begin{aligned}
           \eqref{label:psi_N} & \leq C_2 \|\Delta (\varepsilon x)_k\|^2 + \theta \varepsilon^k \frac{\delta}{2}\psi^k\\
           & = \frac{C_2}{(1 + \theta \varepsilon^k)^2} \|\Delta \varepsilon_k x^{k+1} -\theta \varepsilon^k(M(y^{k+1}) - M^k) - \theta \varepsilon^k z^k\|^2 + \theta \varepsilon^k \frac{\delta}{2}\psi^k\\
            & \leq \frac{3 C_2 (\Delta \varepsilon_k)^2}{(1 + \theta \varepsilon^k)^2} \|x^{k+1}\|^2 + \frac{3 C_2 (\theta \varepsilon_k)^2}{(1 + \theta \varepsilon^k)^2}(\theta^2L^2 + 1)\|z^k\|^2 + \theta \varepsilon^k \frac{\delta}{2}\psi^k\\
            & \leq 3 C_2 (\theta \varepsilon^k)^2 \left(\frac{\Delta \varepsilon_k}{\theta \varepsilon^k}\right)^2 \|x^{k+1}\|^2 + \frac{6 C_2 (\theta \varepsilon_k)^2 }{(1 + \theta \varepsilon^k)^2}(\theta^2L^2 + 1)\psi^k + \theta \varepsilon^k \frac{\delta}{2}\psi^k\\
            & \leq \theta \varepsilon^k C_3 \left(\frac{\Delta \varepsilon_k}{\theta \varepsilon^k}\right)^2 + \theta \varepsilon^k \delta \psi^k.
        \end{aligned}
    \end{equation}
The conclusion follows by plugging \eqref{eq:control_delta_ex} into \eqref{eq:descent_psi_2}, for $k_0:=\max\{k_0', k_0''\}$ and $C_\delta:=C_1 + C_3$.
\end{proof}

\begin{remark}[Rate for EG]\label{rmk:eg}
Inspecting the proof of Lemma \ref{lem:rate_explicit} we can notice that assuming $\varepsilon^k:=0$ for all $k \geq 0$, we have obtained a proof for the last-iterate convergence rate for EG. In fact, setting $\varepsilon^k=0$ and following the same proof up to the first line in \eqref{eq:descent_psi_2} we get, actually for $\theta \in \big(0, \frac{1}{L}\big]$,
\begin{equation}
\Delta \psi_k \leq 0\,, \quad \text{i.e.,} \quad \|M(x^{k+1})\| \leq \|M(x^k)\|\, \quad \text{for all} \ k \geq 0. 
\end{equation}
If $\theta < \frac{1}{L}$, this descent property of $(\|M(x^k)\|)_k$ coupled with the fact that $(\|M(x^k)\|)_k \in \ell^2$ (cf.~(55) in \cite{glg22}), yields $\|M(x^k)\|=o(k^{-{\frac{1}{2}}})$ as $k \to +\infty$. This result was established in \cite[Lemma 3.2]{glg22} only for $\theta \in \big(0, \frac{1}{\sqrt{2}L}\big]$ with a computer-assisted proof. 
\end{remark}

We now develop the analogous result of Lemma \ref{lem:continuous_phi}.

\begin{lemma}[$\varphi$-analysis]\label{lem:strong_convergence_explicit}
	 Let $(x^k)_k$ and $(y^{k+1})_k$ be generated by \eqref{eq:anchored_eag} with $\theta \in \big(0, \frac{1}{L}\big)$ and $(\varepsilon^k)_k$ satisfying \eqref{eq:condition_for_eps}. Let $(\varphi^k)_k$ defined as in \eqref{eq:def_phi_discrete} for some $x^* \in \zer M$. Then, there exist a vanishing real sequence $(\eta^k)_k$ and $k_1\geq 0$, such that
	\begin{equation}
		\begin{aligned}
			\Delta \varphi_{k} + \frac{\theta \varepsilon^{k+1}}{2} \varphi^{k} \leq    \frac{\theta \varepsilon^{k+1}}{2}\left(\|x^*\|^2 - \frac{1 - \eta^{k}}{1+\theta \varepsilon^{k+1}}\|x^{k}\|^2\right)\, \quad \text{for all} \ k \geq k_1\,.
		\end{aligned}
	\end{equation}
\end{lemma}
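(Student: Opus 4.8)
The plan is to imitate, step by step, the continuous-time proof of Lemma~\ref{lem:continuous_phi}. Since $v=0$, the (implicit) update \eqref{eq:update_for_x} reads $(1+\theta\varepsilon^{k+1})x^{k+1}=x^k-\theta M(y^{k+1})$, so $x^{k+1}$ is the convex combination $\tfrac{1}{1+\theta\varepsilon^{k+1}}x^k+\tfrac{\theta\varepsilon^{k+1}}{1+\theta\varepsilon^{k+1}}v$ displaced by $-\tfrac{\theta}{1+\theta\varepsilon^{k+1}}M(y^{k+1})$. Expanding $\varphi^{k+1}=\tfrac12\|x^{k+1}-x^*\|^2$ with the convex-combination identity for the first two terms and then accounting for the displacement, a short computation gives the exact identity
\[
\Delta\varphi_k+\theta\varepsilon^{k+1}\varphi^{k+1}=\frac{\theta\varepsilon^{k+1}}{2}\Big(\|x^*\|^2-\frac{\|x^k\|^2}{1+\theta\varepsilon^{k+1}}\Big)-\theta\langle M(y^{k+1}),x^{k+1}-x^*\rangle-\frac{\theta^2}{2(1+\theta\varepsilon^{k+1})}\|M(y^{k+1})\|^2 ,
\]
whose first bracket is, verbatim, the main term of the claim (with $\eta^k=0$); everything else is the discretization overhead attached to $M(y^{k+1})$.

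Next, reproducing the single use of monotonicity in Lemma~\ref{lem:continuous_phi} (there $\langle M(x(t)),x(t)-x^*\rangle\ge0$), I split $x^{k+1}-x^*=(x^{k+1}-y^{k+1})+(y^{k+1}-x^*)$ and apply $\langle M(y^{k+1})-M(x^*),y^{k+1}-x^*\rangle\ge0$ with $M(x^*)=0$; this bounds $-\theta\langle M(y^{k+1}),x^{k+1}-x^*\rangle$ by $-\theta\langle M(y^{k+1}),x^{k+1}-y^{k+1}\rangle$, leaving the remainder $R_k:=-\theta\langle M(y^{k+1}),x^{k+1}-y^{k+1}\rangle-\tfrac{\theta^2}{2(1+\theta\varepsilon^{k+1})}\|M(y^{k+1})\|^2$. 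In continuous time $R_k$ is simply absent, and proving that $R_k$ is dominated by a vanishing multiple of $\|x^k\|^2$ plus nonpositive terms is the heart of the argument.

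To handle $R_k$ I would use the relations \eqref{eq:def_y}--\eqref{eq:def_xi}: from $x^{k+1}-y^{k+1}=\theta(M(x^k)-M(y^{k+1}))+\theta(\varepsilon^kx^k-\varepsilon^{k+1}x^{k+1})$ and $\|M(x^k)-M(y^{k+1})\|\le\theta L\|z^k\|$ one gets $\|x^{k+1}-y^{k+1}\|\le(1+o(1))\theta^2L\|z^k\|+\beta_k$ for large $k$, with $\beta_k=O(\varepsilon^k\|z^{k+1}\|+|\Delta\varepsilon_k|\,\|x^{k+1}\|)$ carrying an extra small factor. Substituting $\theta M(y^{k+1})=-(\Delta x_k+\theta\varepsilon^{k+1}x^{k+1})$ and completing a square, the principal part of $R_k$ equals $\tfrac12\|x^{k+1}-y^{k+1}\|^2-\theta^2\psi^k$, which — by the previous bound and $\theta<1/L$, the discrete counterpart of the slack $1+\delta_1<(\theta L)^{-2}$ from Lemma~\ref{lem:rate_explicit} — is at most $-c\,\theta^2\psi^k+C\beta_k^2$ for some $c>0$, giving a negative $\|z^k\|^2$-budget. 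The remaining cross terms in $R_k$ (of the types $\theta\varepsilon^{k+1}\langle x^{k+1},z^k\rangle$, $\tfrac{(\theta\varepsilon^{k+1})^2}{1+\theta\varepsilon^{k+1}}\langle x^{k+1},\Delta x_k\rangle$, $\tfrac{\theta\varepsilon^{k+1}}{1+\theta\varepsilon^{k+1}}\|\Delta x_k\|^2$, the last two expanded through $\|\Delta x_k\|^2=\theta^2\|M(y^{k+1})\|^2+\dots$) are each split by Young's inequality into a part absorbed into the negative budgets $-c\theta^2\psi^k$ and $-\tfrac{\theta^2}{2(1+\theta\varepsilon^{k+1})}\|M(y^{k+1})\|^2$ (valid since $\varepsilon^k\to0$) and a $\|x^{k+1}\|^2$-part whose coefficient is $o(\theta\varepsilon^{k+1})$, because it is forced to carry an extra $\varepsilon^k$, $|\Delta\varepsilon_k|/\varepsilon^k$, or a constant $<1$ from $\theta^2L^2<1$. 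Replacing $\|x^{k+1}\|^2$ by $(1+o(1))\|x^k\|^2$ up to absorbable terms, passing from $\varphi^{k+1}$ to $\varphi^k$ (which produces the factor $\tfrac{1}{1+\theta\varepsilon^{k+1}}$ and lets one weaken the coefficient to $\tfrac{\theta\varepsilon^{k+1}}{2}$), and collecting all vanishing coefficients into one sequence $\eta^k\to0$ with $k_1$ past every threshold used, yields the claim. The main obstacle is exactly this last estimate: unlike in continuous time, the error coefficients in front of $\|x^k\|^2$ must be shown to be $o(\theta\varepsilon^{k+1})$, not merely $O(\theta\varepsilon^{k+1})$ — which is precisely what forces one to exploit the regularized design of the scheme, the bound $\theta<1/L$, and all three properties in \eqref{eq:condition_for_eps}.
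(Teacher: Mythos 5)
Your proposal is correct and follows essentially the same route as the paper's proof: monotonicity applied at $y^{k+1}$ (with $M(x^*)=0$), Lipschitz control of $x^{k+1}-y^{k+1}$ and of $x^k-y^{k+1}=\theta z^k$ using the slack $(1+\delta_1)(\theta L)^2<1$, and Young-type estimates turning every discretization error into an $o(\theta\varepsilon^{k+1})$ multiple of $\|x^k\|^2$ plus absorbable negative budgets, all without invoking boundedness of $(x^k)_k$. Your opening exact identity, which exploits the convex-combination form of the implicit update, is a clean compression of the paper's term-by-term expansion of $\Delta\varphi_k$, but the subsequent analysis is the same in substance.
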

\begin{proof}
	Let $k \geq 0$. We have
	\begin{equation}
		\begin{aligned}
			\Delta \varphi_{k} & = \bra \Delta x_{k}, x^{k+1} - x^*  \ket  - \frac{1}{2}\|\Delta x_{k}\|^2\\
			& =  \bra  -\theta M(y^{k+1}) - \theta \varepsilon^{k+1}x^{k+1}, x^{k+1} - x^*  \ket  - \frac{1}{2}\| \Delta x_{k}\|^2 \\
			& = -\theta  \bra  M(y^{k+1}), x^{k+1} - x^* \ket  - \theta \varepsilon^{k+1}  \bra  x^{k+1}, x^{k+1} - x^* \ket  - \frac{1}{2}\|\Delta x_{k}\|^2\\
			& = -\theta  \bra  M(y^{k+1}), y^{k+1} - x^* \ket  - \theta \varepsilon^{k+1}  \bra  x^{k+1}, x^{k+1} - x^* \ket\\
                & \quad \underbrace{- \theta   \bra  M(y^{k+1}), x^{k+1} - y^{k+1} \ket - \frac{1}{2}\|\Delta x_{k}\|^2}_{(\labterm{label:eag_I}{I})}\,.\\
		\end{aligned}
	\end{equation}
	We study \eqref{label:eag_I} separately
	\begin{equation}
		\begin{aligned}
			\eqref{label:eag_I}:=& - \theta   \bra  M(y^{k+1}), x^{k+1} - y^{k+1} \ket - \frac{1}{2}\|\Delta x_{k}\|^2\\
			= &  \underbrace{- \theta   \bra  M(y^{k+1}), x^{k} - y^{k+1} \ket }_{(\labterm{label:eag_I.1}{I.1})}- \frac{1}{2}\|\Delta x_{k}\|^2 - \theta  \bra  M(y^{k+1}), \Delta x_{k} \ket \,.
		\end{aligned}
	\end{equation}
	Regarding the term \eqref{label:eag_I.1}, using the update formula for $y^{k+1}$ (see \eqref{eq:anchored_eag}), the polarization identity and the update formula for $x^{k+1}$ (see \eqref{eq:update_for_x}), we get
	\begin{equation}
		\begin{aligned}
			\eqref{label:eag_I.1}& :=  - \theta   \bra  M(y^{k+1}), x^{k} - y^{k+1} \ket \\
			& =  - \theta   \bra  M(y^{k+1}), \theta \varepsilon^{k} x^{k} + \theta M(x^{k}) \ket \\
			& =  -  \bra  \theta M(y^{k+1}), \theta \varepsilon^{k+1} x^{k+1} \ket  - \theta   \bra  M(y^{k+1}), \theta \varepsilon^{k} x^{k} - \theta \varepsilon^{k+1} x^{k+1} \ket \\
                & \quad - \theta^2 \bra  M(y^{k+1}),  M(x^{k}) \ket \\
			& = -\frac{1}{2}\|\Delta x_{k}\|^2 + \frac{1}{2}\|\theta \varepsilon^{k+1} x^{k+1} \|^2 + \theta^2  \bra  M(y^{k+1}),  \Delta(\varepsilon x)_{k} \ket  \\
			& \quad   + \underbrace{\frac{\theta^2}{2}\|M(y^{k+1})\|^2 - \theta^2  \bra M(y^{k+1}), M(x^{k}) \ket }_{(\labterm{label:eag_I.2}{I.2})}\,.
		\end{aligned}
	\end{equation}
	To bound \eqref{label:eag_I.2}, we make use of the Lipschitz continuity of $M$, the update formula for $y^{k+1}$ (see \eqref{eq:anchored_eag}) and the Young inequality as follows
	\begin{equation}
		\begin{aligned}
			 \eqref{label:eag_I.2}&:=\frac{\theta^2}{2}\|M(y^{k+1})\|^2 - \theta^2  \bra  M(y^{k+1}), M(x^{k}) \ket \\
			& = \frac{\theta^2}{2}\|M(y^{k+1}) - M(x^{k})\|^2 - \frac{\theta^2}{2} \|M(x^{k})\|^2 \\
			& \leq \frac{(\theta L)^2}{2}\| \theta \varepsilon^{k} x^{k} + \theta M(x^{k})\|^2 - \frac{\theta^2}{2} \|M(x^{k})\|^2 \\
			& \leq \frac{(\theta L)^2}{2}\left(1 + \frac{1}{\delta_1}\right)\| \theta \varepsilon^{k} x^{k}\|^2 - \frac{\theta^2}{2} \left(1 - (\theta L )^2 (1 + \delta_1) \right)  \|M(x^{k})\|^2 \\
             & \leq \frac{(\theta L)^2}{2}\left(1 + \frac{1}{\delta_1}\right)\| \theta \varepsilon^{k} x^{k}\|^2,
		\end{aligned}
	\end{equation}
	for some $\delta_1 >0$ such that $1+\delta_1 < \frac{1}{(\theta L)^2}$, which exists since $\theta < \frac{1}{L}$. Plugging everything together and using the monotonicity of $M$, we get
	\begin{equation}\label{eq:dpsi_1}
		\begin{aligned}
			\Delta \varphi_{k} & \leq  - \theta \varepsilon^{k+1}  \bra  x^{k+1}, x^{k+1} - x^* \ket  &\\
			& \quad + \frac{(\theta L)^2}{2}\left(1 + \frac{1}{\delta_1}\right) \| \theta \varepsilon^{k} x^{k}\|^2 & \text{$\rhd$ \textcolor{gray}{Remainder of \eqref{label:eag_I.2}}}\\
			& \quad  -\frac{1}{2}\|\Delta x_{k}\|^2 + \frac{1}{2}\|\theta \varepsilon^{k+1} x^{k+1}\|^2 + \theta^2  \bra  M(y^{k+1}),  \Delta(\varepsilon x)_{k} \ket  & \text{$\rhd$ \textcolor{gray}{Remainder of \eqref{label:eag_I.1}}}\\
			& \quad - \frac{1}{2}\|\Delta x_{k}\|^2 - \theta  \bra  M(y^{k+1}), \Delta x_{k} \ket \,. & \text{$\rhd$ \textcolor{gray}{Remainder of \eqref{label:eag_I}}}\hspace{0.281cm}
		\end{aligned}
	\end{equation}
	The first term in \eqref{eq:dpsi_1} can be written as in the continuous time approach as
	\begin{equation}\label{eq:parallel}
		- \theta \varepsilon^{k+1}  \bra  x^{k+1}, x^{k+1} - x^* \ket  = \frac{\theta \varepsilon^{k+1}}{2}\left(\|x^*\|^2 - \|x^{k+1}\|^2\right) - \theta \varepsilon^{k+1} \varphi^{k+1}\,.
	\end{equation}
	We now deal with the remainder in \eqref{eq:dpsi_1}, where we denote by $C:= \frac{(\theta L)^2}{2}\left(1 + \frac{1}{\delta_1}\right)$, i.e.,
	\begin{equation}\label{eq:remainder_phi}
		\begin{aligned}
			&- \|\Delta x_{k}\|^2 + \underbrace{\frac{C(\theta\varepsilon^{k})^2}{2}\| x^{k} \|^2 + \frac{1}{2}\|\theta \varepsilon^{k+1} x^{k+1}\|^2}_{(\labterm{label:phi_N1}{N.1})}  +\underbrace{\theta  \bra  M(y^{k+1}), \theta \Delta (\varepsilon x)_{k} - \Delta x_{k}  \ket }_{(\labterm{label:phi_II}{II})}\,.
		\end{aligned}
	\end{equation}
	We start from \eqref{label:phi_II}. Using the definition of $x^{k+1}$ (see \eqref{eq:update_for_x}), we get
	\begin{equation}
		\begin{aligned}
			\eqref{label:phi_II} & := \theta  \bra  M(y^{k+1}), \theta \Delta (\varepsilon x)_{k} - \Delta x_{k}  \ket  =   \bra  \theta M(y^{k+1}), -(1-\theta \varepsilon^{k})\Delta x_{k} + \theta \Delta \varepsilon_{k} x^{k+1}  \ket \\
			& =   \bra  -\Delta x_{k} - \theta \varepsilon^{k+1} x^{k+1}, -(1-\theta \varepsilon^{k})\Delta x_{k} + \theta \Delta \varepsilon_{k} x^{k+1}  \ket \\
			& = (1-\theta \varepsilon^{k})\|\Delta x_{k}\|^2 - \theta \Delta \varepsilon_{k} \bra  \Delta x_{k}, x^{k+1} \ket  \\
			& \quad + \theta\varepsilon^{k+1} (1-\theta \varepsilon^{k}) \bra  x^{k+1}, \Delta x_{k} \ket  - \theta^2 \varepsilon^{k+1} \Delta \varepsilon_{k}\|x^{k+1}\|^2\\
                & = (1-\theta \varepsilon^{k})\|\Delta x_{k}\|^2 + \theta \varepsilon^{k+1} \bra  x^{k+1}, \Delta x_{k} \ket  \\
			& \quad \underbrace{-\theta\left(\Delta \varepsilon_{k} + \theta \varepsilon^{k+1} \varepsilon^{k} \right)\bra  x^{k+1}, \Delta x_{k} \ket  - \theta^2 \varepsilon^{k+1} \Delta \varepsilon_{k}\|x^{k+1}\|^2}_{(\labterm{label:phi_N2}{N.2})}\,.
		\end{aligned}
	\end{equation}
Plugging \eqref{label:phi_II} into \eqref{eq:remainder_phi} and denoting by $(\labterm{label:phi_N}{N}):=\eqref{label:phi_N1} + \eqref{label:phi_N2}$, we get
\begin{equation*}
	\begin{aligned}
		\eqref{eq:remainder_phi}=& - \|\Delta x_{k}\|^2 + (1-\theta \varepsilon^{k})\|\Delta x_{k}\|^2 +  \theta \varepsilon^{k+1} \bra  x^{k+1}, \Delta x_{k} \ket + \eqref{label:phi_N}\\
		=& -\theta \varepsilon^{k} \|\Delta x_{k}\|^2 +  \theta \varepsilon^{k+1}\left(\frac{1}{2}\|x^{k+1}\|^2 + \frac{1}{2}\|\Delta x_{k}\|^2 - \frac{1}{2}\|x^{k}\|^2  \right) + \eqref{label:phi_N}\\
        =& \frac{\theta \varepsilon^{k+1}}{2}\|x^{k+1}\|^2 - \frac{\theta \varepsilon^{k+1}}{2}\|x^{k}\|^2 - \frac{\theta \varepsilon^{k}}{2}\left(1 - \eta_1^{k}\right) \|\Delta x_{k}\|^2 + \eqref{label:phi_N}\,,
	\end{aligned}
 \end{equation*}
where $\eta_1^{k} := \frac{\Delta \varepsilon_{k}}{\varepsilon^{k}}$ is an infinitesimal term as $k \to +\infty$ from \eqref{eq:condition_for_eps}. We deal with \eqref{label:phi_N} as follows
    \begin{equation*}
        \begin{aligned}
            \eqref{label:phi_N}= & \frac{C(\theta\varepsilon^{k})^2}{2}\| x^{k}\|^2 + \frac{1}{2}\|\theta \varepsilon^{k+1} x^{k+1}\|^2 -\theta\left(\Delta \varepsilon_{k} + \theta \varepsilon^{k+1} \varepsilon^{k} \right)\bra  x^{k+1}, \Delta x_{k} \ket  - \theta^2 \varepsilon^{k+1} \Delta \varepsilon_{k}\|x^{k+1}\|^2\\
		= & \left( (\theta \varepsilon^{k+1})^2 -  2\theta^2 \varepsilon^{k+1} \Delta \varepsilon_{k} \right)\frac{1}{2}\|x^{k+1}\|^2 \\
		&-\theta\left(\Delta \varepsilon_{k} + \theta \varepsilon^{k+1} \varepsilon^{k} \right) \left(\|\Delta x_{k}\|^2 + \bra x^{k}, \Delta x_{k}\ket \right) +  \frac{C(\theta \varepsilon^{k})^2}{2}\|x^{k}\|^2\\
		= & \left( (\theta \varepsilon^{k+1})^2 -  2\theta^2 \varepsilon^{k+1} \Delta \varepsilon_{k} \right)\left(\frac{1}{2}\|x^{k}\|^2+ \bra  \Delta x_{k}, x^{k} \ket  + \frac{1}{2}\|\Delta x_{k}\|^2\right)\\
		& -\theta\left(\Delta \varepsilon_{k} + \theta \varepsilon^{k+1} \varepsilon^{k} \right) \left(\|\Delta x_{k}\|^2 + \bra x^{k}, \Delta x_{k}\ket \right) +  \frac{C(\theta \varepsilon^{k})^2}{2}\|x^{k}\|^2\\
         \leq & \left( (\theta \varepsilon^{k+1})^2 -  2\theta^2 \varepsilon^{k+1} \Delta \varepsilon_{k} + C(\theta \varepsilon^{k})^2 \right) \frac{1}{2}\|x^{k}\|^2 \\
         & +  \left( (\theta \varepsilon^{k+1})^2 -  2\theta^2 \varepsilon^{k+1} \Delta \varepsilon_{k} -2\theta \Delta \varepsilon_{k} - 2\theta^2 \varepsilon^{k+1} \varepsilon^{k} \right)  \frac{1}{2}\|\Delta x_{k}\|^2 \\
		& + \left |  (\theta \varepsilon^{k+1})^2 -  2\theta^2 \varepsilon^{k+1} \Delta \varepsilon_{k} - \theta \Delta \varepsilon_{k} - \theta^2 \varepsilon^{k+1} \varepsilon^{k} \right | \left(\frac{1}{2}\|x^{k}\|^2 + \frac{1}{2}\|\Delta x_{k}\|^2 \right)\\ 
   = & \ \eta^{k} \frac{\theta \varepsilon^{k+1}}{2}\|x^{k}\|^2 +  \eta_2^{k} \frac{\theta \varepsilon^{k}}{2} \|\Delta x_{k}\|^2\,,
        \end{aligned}
    \end{equation*}
    where $\eta^{k}$ and $\eta_2^{k}$ are the coefficients that make the last identity hold true for all $k \geq 0$. Applying once again \eqref{eq:condition_for_eps}, it is easy to note that these sequences are indeed infinitesimal, i.e., they converge to zero as $k \to +\infty$. Plugging \eqref{eq:remainder_phi} and \eqref{eq:parallel} into \eqref{eq:dpsi_1} we observe that the term $\frac{\theta \varepsilon^{k+1}}{2}\|x^{k+1}\|^2$ cancels and thus we get for all $k \geq 0$
    \begin{equation}
        \Delta \varphi_{k} + \theta \varepsilon^{k+1} \varphi^{k+1} \leq    \frac{\theta \varepsilon^{k+1}}{2}\left(\|x^*\|^2 - (1 - \eta^{k})\|x^{k}\|^2\right) - \frac{\theta \varepsilon^{k}}{2}\left(1 - \eta_1^{k} - \eta_2^{k}\right) \|\Delta x_{k}\|^2\,.
    \end{equation}
    Taking $k_1'\geq 0$ such that $\eta^{k} \leq 1$ and $\eta_1^{k} + \eta_2^{k} \leq 1$ for all $k \geq k_1'$, we deduce:
    \begin{equation}\label{eq:descent_phi_semifinal}
        \Delta \varphi_{k} + \theta \varepsilon^{k+1} \varphi^{k+1} \leq    \frac{\theta \varepsilon^{k+1}}{2}\left(\|x^*\|^2 - (1 - \eta^{k})\|x^{k}\|^2\right) \, \quad \text{for all} \ k \geq k_1'\,.
    \end{equation}
    Observe now that $\Delta \varphi_{k} + \theta \varepsilon^{k+1} \varphi^{k+1} = (1 + \theta \varepsilon^{k+1})\Delta \varphi_{k} + \theta \varepsilon^{k+1} \varphi^{k}$. Hence, plugging this into the left hand-side of \eqref{eq:descent_phi_semifinal}, dividing by $1 + \theta \varepsilon^{k+1} > 0$ and using that $\frac{\theta \varepsilon^{k+1}}{2} \leq \frac{\theta \varepsilon^{k+1}}{1+\theta \varepsilon^{k+1}} \leq \theta \varepsilon^{k+1}$ for all $k \geq k_1''$ and some $k_1'' \geq 0$ large enough, we obtain the claim with $k_1 :=\max\{k_1', k_1''\}$.
\end{proof}

\subsection{Main Convergence Result}
As in the continuous time approach, we combine the two preliminary lemmas on $(\varphi^k)_k$ and $(\psi^k)_k$ to get a converging result with a general rate expressed in terms of $(\varepsilon^k)_k$. We do so by utilizing the discrete Gronwall inequality. In Section \ref{sec:choice_of_eps} below, we turn this inequality into asymptotic rates for several choices of the parameters relying on Chung's Lemma, see Appendix \ref{sec:chung_lemma}.

\begin{theorem}[Convergence statement]\label{thm:convergence_feg}
	Let $(x^k)_k$, $(y^{k+1})_k$ be generated by Algorithm \ref{alg:gfeg} with $\theta \in \big(0, \frac{1}{L}\big)$ and $(\varepsilon^k)_k$ satisfying \eqref{eq:condition_for_eps}. Then $(x^k)_k$ converges strongly to the minimum norm solution to \eqref{eq:intro_monotone_inclusion} as $k \rightarrow +\infty$, and the convergence rate of the residual results from the following statement: For any $\delta \in (0,1)$, there exist $\widetilde C_\delta \geq 0$ and $k_0 \geq 0$ such that
	\begin{equation}\label{eq:rate_feg_abstract}
		\psi^{k+1} \leq  \frac{\psi^{k_0}}{\gamma^k} + \frac{\widetilde C_\delta}{\gamma^k}\sum_{j = k_0}^k \gamma^j \varepsilon^j \left(\frac{\varepsilon^{j+1}-\varepsilon^j}{\theta \varepsilon^j}\right)^2\ \quad \text{for all} \ k \geq k_0\,,
	\end{equation}
      where $(\gamma^k)_k$ is the sequence defined by $\gamma^k:=\prod_{l=k_0}^k(1-2(1-\delta)\theta \varepsilon^l)^{-1}$.
\end{theorem}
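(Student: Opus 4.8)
The plan is to transcribe the continuous-time argument behind Theorem \ref{thm:convergence_cont} into discrete time, using Lemma \ref{lem:strong_convergence_explicit} and Lemma \ref{lem:rate_explicit} in place of Lemma \ref{lem:continuous_phi} and Lemma \ref{lem:continuous_psi}, and replacing the continuous Gronwall step and Lemma \ref{lem:limsup_continuous} by their discrete analogues (the discrete Gronwall inequality and Chung's Lemma, Appendix \ref{sec:chung_lemma}). As in Section \ref{sec:gfeg} we assume $\zer M \neq \emptyset$ and $v = 0$ without loss of generality, so the iteration is \eqref{eq:anchored_eag}.

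First I would establish that $(x^k)_k$ is bounded; this is the one point where, unlike in continuous time, a little care is needed, since Lemma \ref{lem:rate_explicit} presupposes boundedness. Fix any $x^* \in \zer M$ and apply Lemma \ref{lem:strong_convergence_explicit}. Since $(\eta^k)_k$ vanishes and $\varepsilon^k \to 0$, there is $k_1$ from which $1 - \eta^k \geq 0$ and $\tfrac{\theta\varepsilon^{k+1}}{2} < 1$, so dropping the nonpositive term in the right-hand side of that lemma gives $\varphi^{k+1} \leq (1 - \tfrac{\theta\varepsilon^{k+1}}{2})\varphi^k + \tfrac{\theta\varepsilon^{k+1}}{2}\|x^*\|^2$; an immediate induction yields $\varphi^k \leq \max\{\varphi^{k_1}, \|x^*\|^2\}$ for all $k \geq k_1$, hence $(x^k)_k$ is bounded.

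With boundedness available, Lemma \ref{lem:rate_explicit} applies: for fixed $\delta \in (0,1)$ there are $C_\delta \geq 0$ and $k_0 \geq 0$ such that $\psi^{k+1} \leq (1 - 2(1-\delta)\theta\varepsilon^k)\psi^k + C_\delta\theta\varepsilon^k b^k$ for $k \geq k_0$, with $b^k := \big(\tfrac{\varepsilon^{k+1}-\varepsilon^k}{\theta\varepsilon^k}\big)^2$. Enlarging $k_0$ so that also $1 - 2(1-\delta)\theta\varepsilon^k > 0$, the discrete Gronwall inequality with weights $\gamma^k := \prod_{l=k_0}^k(1-2(1-\delta)\theta\varepsilon^l)^{-1}$ (and the empty-product convention $\gamma^{k_0-1}=1$) turns this recursion, by multiplying through by $\gamma^k$ and telescoping, into exactly \eqref{eq:rate_feg_abstract} with $\widetilde C_\delta := \theta C_\delta$. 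Moreover \eqref{eq:condition_for_eps} forces $b^k \to 0$, so writing $C_\delta\theta\varepsilon^k b^k = \big(2(1-\delta)\theta\varepsilon^k\big)\tfrac{C_\delta b^k}{2(1-\delta)}$ and invoking Chung's Lemma — the step sizes $2(1-\delta)\theta\varepsilon^k$ eventually lie in $(0,1)$ and sum to $+\infty$ by \eqref{eq:condition_for_eps} — gives $\psi^k \to 0$. Since $(x^k)_k$ is bounded and $\varepsilon^k \to 0$, $\varepsilon^k x^k \to 0$, whence $\|M(x^k)\| \leq \sqrt{2\psi^k} + \varepsilon^k\|x^k\| \to 0$.

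It remains to upgrade this to strong convergence to $x^* := \proj_{\zer M}(0)$, the minimum-norm solution. As $(x^k)_k$ is bounded and $M(x^k) \to 0$, any weak cluster point $\bar x$ of $(x^k)_k$ satisfies $0 \in M(\bar x)$ by the weak-to-strong closedness of the graph of the maximal monotone operator $M$; picking a subsequence realizing $\liminf_k\|x^k\|$, passing to a weakly convergent sub-subsequence with limit $\bar x \in \zer M$, and combining weak lower semicontinuity of the norm with $\|x^*\| \leq \|\bar x\|$, we get $\|x^*\| \leq \liminf_k\|x^k\|$, i.e. $\limsup_k(\|x^*\|^2 - \|x^k\|^2) \leq 0$. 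Because $\tfrac{1-\eta^k}{1+\theta\varepsilon^{k+1}} \to 1$ and $(x^k)_k$ is bounded, this upgrades to $\limsup_{k\to+\infty}\big(\|x^*\|^2 - \tfrac{1-\eta^k}{1+\theta\varepsilon^{k+1}}\|x^k\|^2\big) \leq 0$, so Lemma \ref{lem:strong_convergence_explicit} with this $x^*$ reads $\varphi^{k+1} \leq (1 - \tfrac{\theta\varepsilon^{k+1}}{2})\varphi^k + \tfrac{\theta\varepsilon^{k+1}}{2}h^k$ with $\limsup_k h^k \leq 0$; a final application of Chung's Lemma gives $\varphi^k \to 0$, i.e. $x^k \to x^*$ strongly. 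I expect the only real obstacle to be organizational — threading the thresholds and constants through the two lemmas and the Gronwall step so the rate reads precisely as \eqref{eq:rate_feg_abstract} — since the weak-cluster-point step is standard for maximal monotone operators.
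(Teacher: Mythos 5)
Your proof is correct and follows essentially the same route as the paper: boundedness of $(x^k)_k$ from Lemma \ref{lem:strong_convergence_explicit}, the abstract rate from Lemma \ref{lem:rate_explicit} combined with the discrete Gronwall expansion, and strong convergence by transplanting the argument of Theorem \ref{thm:convergence_cont} with the min-norm $x^*$. The only slip is nominal: the tool you invoke twice as ``Chung's Lemma'' (step sizes in $(0,1)$ summing to $+\infty$, forcing term with nonpositive $\limsup$) is in fact the second statement of Lemma \ref{lem:limsup_discrete}, not Lemma \ref{lem:chung_1} or Lemma \ref{lem:chung_2}, which the paper reserves for the explicit rate corollaries.
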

\begin{proof}
	The result is based on Lemma \ref{lem:strong_convergence_explicit} and Lemma \ref{lem:rate_explicit}. From Lemma \ref{lem:strong_convergence_explicit}, we deduce that $(x^k)_k$ is bounded. Thus, Lemma \ref{lem:rate_explicit} applies and yields, together with Lemma \ref{lem:limsup_discrete}, the abstract rate \eqref{eq:rate_feg_abstract}, where $\widetilde C_\delta:= \frac{C_\delta}{2(1-\delta)}$, and $k_0 \geq 1$ is such that, in addition to \eqref{eq:decrease_psi_discrete}, $\varepsilon^k < \frac{1}{2(1-\delta)}$ for all $k \geq k_0$. In particular, since \eqref{eq:condition_for_eps} holds, we get $\psi^k\to 0$ for $k \to +\infty$ by applying the second statement of Lemma \ref{lem:limsup_discrete}. The strong convergence of $(x^k)_k$ to the element with the minimum norm in $\zer M$ follows from the same argument as in Theorem \ref{thm:convergence_cont}, by using Lemma \ref{lem:strong_convergence_explicit} and the second statement of Lemma \ref{lem:limsup_discrete}.
\end{proof}

\begin{remark}[Convergence of $(y^k)_k$]
    Since $(x^k)_k$ converges strongly to the minimum norm solution to \eqref{eq:intro_monotone_inclusion} as $k \rightarrow +\infty$ and $M(x^k)\to 0$ as $k\to +\infty$, $(y^{k+1})_k$ converges also strongly to the same point. 
\end{remark}

\subsection{Choices of \texorpdfstring{$\varepsilon^k$}{epsilon}}\label{sec:choice_of_eps}

An immediate consequence of our approach is a straightforward analysis of Algorithm \ref{alg:gfeg} with several choices of the parameter $(\varepsilon^k)_k$. We only have to turn \eqref{eq:rate_feg_abstract} into explicit rates in each case. We start with the most natural choice.

\paragraph{Linear:} In this section, we make the following choice:
\begin{equation}\label{eq:eps_linear}
	\varepsilon^k:=\frac{\alpha}{\theta(k + \beta)}\, \quad \text{for all} \ k \geq 0, \ \mbox{and} \ \alpha, \beta >0\,,
\end{equation}
which can be understood as a discrete time counterpart of $\varepsilon(t):=\frac{\alpha}{t}$. Specifically, since $\theta$ represents the temporal step size, the term $t^k := \theta(k + \beta)$ represents the discrete time, with $\theta \beta = t^0$ as the starting time $t_0$ in \eqref{eq:tychonov_flow}. The convergence result aligns with its continuous time counterpart stated in Corollary \ref{cor:continuous_optimal_rate}.

\begin{corollary}\label{cor:rate_linear}
	Let $(x^k)_k$, $(y^{k+1})_k$ be generated by Algorithm \ref{alg:gfeg} with $\theta \in \big(0, \frac{1}{L}\big)$ and $(\varepsilon^k)_k$ defined as in \eqref{eq:eps_linear} with $\alpha, \beta >0$. Then $(x^k)_k$ converges strongly to the minimum norm solution to \eqref{eq:intro_monotone_inclusion} as $k \rightarrow +\infty$, the residual showing the following convergence rate:
	\begin{enumerate}
            \item If $\alpha > 1$, then $\|M(x^k)\|  = O(k^{-1})$ as $k\to +\infty$;
		\item If $\alpha \leq 1$, then $\| M(x^k)\| = O(k^{-\alpha (1-\delta)})$ for any $\delta \in (0, 1)$, as $k\to +\infty$.
	\end{enumerate}
\end{corollary}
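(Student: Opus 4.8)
The plan is to specialize the abstract rate \eqref{eq:rate_feg_abstract} from Theorem \ref{thm:convergence_feg} to the choice \eqref{eq:eps_linear}, exactly mirroring the continuous-time computation in Corollary \ref{cor:continuous_optimal_rate}. Strong convergence to the minimum-norm solution is already granted by Theorem \ref{thm:convergence_feg} (one only needs to check that \eqref{eq:eps_linear} satisfies \eqref{eq:condition_for_eps}, which is immediate: $\varepsilon^k\to 0$, $\sum_k \varepsilon^k = +\infty$ by comparison with the harmonic series, and $\frac{|\varepsilon^{k+1}-\varepsilon^k|}{\varepsilon^k} = \frac{1}{k+\beta+1}\to 0$). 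So the real content is the convergence rate for $\|M(x^k)\|$, and since $\|M(x^k)\| \leq (2\psi^k)^{1/2} + \varepsilon^k\|x^k\|$ with $(x^k)_k$ bounded and $\varepsilon^k = O(k^{-1})$, it suffices to show $\psi^k = O(k^{-2})$ when $\alpha>1$ and $\psi^k = O(k^{-2\alpha(1-\delta)})$ (for $\delta$ arbitrary in $(0,1)$) when $\alpha\le 1$.

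First I would estimate the product $\gamma^k = \prod_{l=k_0}^k (1 - 2(1-\delta)\theta\varepsilon^l)^{-1}$. With $\theta\varepsilon^l = \frac{\alpha}{l+\beta}$, taking logarithms gives $\log\gamma^k = -\sum_{l=k_0}^k \log\!\big(1 - \frac{2(1-\delta)\alpha}{l+\beta}\big) = \sum_{l=k_0}^k \big(\frac{2(1-\delta)\alpha}{l+\beta} + O(l^{-2})\big) = 2(1-\delta)\alpha\log k + O(1)$, so that $\gamma^k = \Theta(k^{2(1-\delta)\alpha})$ — the exact discrete analogue of $\gamma(t) = C t^{2\alpha(1-\delta)}$ in the continuous case. (This is essentially the standard estimate also packaged in Chung's Lemma, cf.~Appendix \ref{sec:chung_lemma}.) Next I would compute $\big(\frac{\varepsilon^{j+1}-\varepsilon^j}{\theta\varepsilon^j}\big)^2 = \big(\frac{1}{j+\beta+1}\big)^2 = \Theta(j^{-2})$, so the summand in \eqref{eq:rate_feg_abstract} is $\gamma^j\varepsilon^j \cdot \Theta(j^{-2}) = \Theta(j^{2(1-\delta)\alpha - 3})$. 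Dividing the partial sum by $\gamma^k = \Theta(k^{2(1-\delta)\alpha})$ then yields, by the usual comparison $\sum_{j=k_0}^k j^{p} = \Theta(k^{p+1})$ for $p>-1$ (and $\Theta(\log k)$ or $\Theta(1)$ otherwise): the transient term $\psi^{k_0}/\gamma^k = O(k^{-2(1-\delta)\alpha})$, and the sum term of order $O(k^{-2}) + O(k^{-2(1-\delta)\alpha})$ up to logarithmic factors, i.e.~$\psi^{k+1} = O(k^{-2(1-\delta)\alpha} + k^{-2})$ as in the continuous case.

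The two regimes then split exactly as in Corollary \ref{cor:continuous_optimal_rate}: if $\alpha>1$, choose $\delta\in(0,1)$ small enough that $2(1-\delta)\alpha > 2$, whence $\psi^{k+1} = O(k^{-2})$ and therefore $\|M(x^k)\| = O(k^{-1})$; if $\alpha\le 1$, then $2(1-\delta)\alpha \le 2$ dominates, giving $\psi^{k+1} = O(k^{-2\alpha(1-\delta)})$ and $\|M(x^k)\| = O(k^{-\alpha(1-\delta)})$ for every $\delta\in(0,1)$. I expect the main technical nuisance to be the careful bookkeeping of the log-of-product estimate for $\gamma^k$ — in particular controlling the $O(l^{-2})$ error terms in $\log(1-\frac{c}{l+\beta})$ and making sure the index shift by $k_0$ (needed so that $1 - 2(1-\delta)\theta\varepsilon^l > 0$) only affects the multiplicative constant — together with the borderline case $p = -1$ in the summation estimate, which produces the harmless logarithmic factor that is absorbed when $\alpha \neq 1$ (and for $\alpha = 1$ this is precisely why one gets $O(k^{-(1-\delta)})$ rather than $O(k^{-1})$, the suboptimality already flagged in Remark \ref{rmk:alpha_one}). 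Everything else is a routine transcription of the continuous-time proof, which is exactly the ``continuous time assisted'' philosophy advertised earlier.
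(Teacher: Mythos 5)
Your proof is correct and reaches the paper's conclusion, but by a slightly different route. You unroll the abstract bound \eqref{eq:rate_feg_abstract} of Theorem \ref{thm:convergence_feg} and estimate the Gronwall product and the weighted sum by hand: $\log\gamma^k = 2(1-\delta)\alpha\log k + O(1)$, summand $\Theta\big(j^{2(1-\delta)\alpha-3}\big)$, then the standard power-sum comparison. The paper instead returns to the one-step recursion of Lemma \ref{lem:rate_explicit}, massages it into the exact form $\psi^{k+1}\le \big(1-\tfrac{c}{k}\big)\psi^k + \tfrac{C}{k^{3}}$ --- using an auxiliary parameter $0<\hat\delta<\delta$ to absorb the factor $\tfrac{k}{k+\beta}\to 1$ --- and invokes Chung's Lemma (Lemma \ref{lem:chung_1}) with $p=2$. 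The two routes are equivalent: the right-hand side of \eqref{eq:rate_feg_abstract} satisfies precisely the recursion to which Chung's Lemma applies, a point the paper itself makes in the remark following Corollary \ref{cor:rate_power}. What the packaged lemma buys is that the $O(l^{-2})$ bookkeeping in your logarithm-of-product estimate, the index shift by $k_0$, and the borderline exponent $2(1-\delta)\alpha=2$ (source of the logarithmic factor you correctly flag) are handled once and for all, together with an explicit leading constant $C(c-p)^{-1}$; what your direct route buys is transparency about where the exponent $2(1-\delta)\alpha$ comes from and an exact mirror of the continuous-time computation in Corollary \ref{cor:continuous_optimal_rate}. The remaining ingredients --- verification of \eqref{eq:condition_for_eps}, the triangle-inequality conversion $\|M(x^k)\|\le(2\psi^k)^{1/2}+\varepsilon^k\|x^k\|$ with $(x^k)_k$ bounded, and the $\delta$-split between the two regimes --- coincide with the paper's.
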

\begin{proof}
  Let $\delta \in (0, 1)$ and $0 < \hat \delta < \delta$. Lemma \ref{lem:rate_explicit}  yields $C_{\hat \delta} \geq 0$  and $k_0 \geq 0$, such that
    \begin{equation}
        	\Delta \psi_k + 2(1- \hat \delta) \varepsilon^k\theta  \psi^k \leq 2(1-\hat \delta)\varepsilon^k \theta h^k\, \quad \mbox{for all} \ k \geq k_0,
    \end{equation}
    where $h^k:= \frac{C_{\hat \delta}}{2(1- \hat \delta)}\left(\frac{\Delta \varepsilon_k}{\theta \varepsilon^k}\right)^2$. Using the definition of $\varepsilon^k$ and of $h^k$, we get $h^k=\frac{C_{\hat \delta}}{2(1- \hat \delta) \theta^2(k + \beta + 1)^2}$, and thus
	\begin{equation}
		\begin{aligned}
			\psi^{k+1} & \leq \left(1- \frac{2 \alpha  (1-\hat \delta) }{k + \beta }\right)\psi^k + \frac{\alpha C_{\hat \delta}}{(k + \beta )\theta^2 (k + \beta + 1)^2}\\
			& =  \bigg(1- \frac{2\alpha  (1- \hat \delta)}{k}\underbrace{\frac{k}{k + \beta }}_{=:\xi^k}\bigg) \psi^k + \frac{\alpha C_{\hat \delta}}{\theta^2} \frac{1}{k^3} \frac{k^3}{(k + \beta)(k + \beta + 1)^2}\, \quad \mbox{for all} \ k \geq k_0\,.
		\end{aligned}
	\end{equation}
   Since $\xi^k \to 1$ as $k  \to +\infty$, there exists $k_1 \geq k_0$ such that
    \begin{equation}
        \psi^{k+1}\leq \left(1- \frac{2\alpha(1-\delta)}{k}\right)\psi^{k} + \frac{\alpha C_{\hat \delta}}{\theta^2}  \frac{1}{k^3}\,  \quad \text{for all} \ k \geq k_1.
    \end{equation}
    We are in the setting of Lemma \ref{lem:chung_1}. If $\alpha  >1$, choosing $\delta\in (0, 1)$ such that $2\alpha (1-\delta) > 2$, we get $\psi^k = O(k^{-2})$ as $k\to +\infty$. If $\alpha \leq 1$, then $2\alpha (1-\delta) < 2$ and thus $\psi^k = O(k^{-2\alpha (1-\delta)})$. To turn these into rates for $\|M(x^k)\|$, observe that $(2\psi^k)^\frac{1}{2} + O(k^{-1})\geq \|M(x^k)\|$ as $k \rightarrow +\infty$, since $x^k$ is bounded and $\varepsilon^k = O(k^{-1})$ as $k \to +\infty$.
\end{proof}

\paragraph{Power:} Our second choice reinforces the influence of the anchor term as follows
\begin{equation}\label{eq:choice_eps_power}
	\varepsilon^k:=\frac{\alpha}{(k + \beta)^\eta} \quad \text{for all} \ k \geq 0, \ \mbox{and} \ \alpha, \beta >0\,, \ \eta \in (0, 1)\,.
\end{equation}
Simple calculation shows that in this case the conditions in \eqref{eq:condition_for_eps} are also satisfied. However, the convergence rate we are able to obtain gets worse.

\begin{corollary}\label{cor:rate_power}
	Let $(x^k)_k$, $(y^{k+1})_k$ be generated by Algorithm \ref{alg:gfeg} with $\theta \in \big(0, \frac{1}{L}\big)$ and $(\varepsilon^k)_k$ defined as in \eqref{eq:choice_eps_power}. Then $(x^k)_k$ converges strongly to the minimum norm solution to \eqref{eq:intro_monotone_inclusion} as $k \to +\infty$, the residual showing the following convergence rate
 \begin{equation}
     \|M(x^k)\| = O(k^{-\eta})\, \quad \text{as} \ k \to +\infty\,.
 \end{equation}
 \end{corollary}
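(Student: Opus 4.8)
The plan is to follow verbatim the template of Corollary \ref{cor:rate_linear}: feed the parameter choice \eqref{eq:choice_eps_power} into the $\psi$-recursion of Lemma \ref{lem:rate_explicit}, reduce everything to a scalar recursion of Chung type, and invoke a (sub-linear) variant of Chung's Lemma from Appendix \ref{sec:chung_lemma}. Strong convergence of $(x^k)_k$ to the minimum norm solution requires no extra work: the three conditions in \eqref{eq:condition_for_eps} hold for $\varepsilon^k=\frac{\alpha}{(k+\beta)^\eta}$ with $\eta\in(0,1)$, so Theorem \ref{thm:convergence_feg} applies directly; only the rate has to be established.

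\textbf{Key steps.} Fix $\delta\in(0,1)$. By the mean value theorem, $|\Delta\varepsilon_k|=\alpha\big((k+\beta)^{-\eta}-(k+1+\beta)^{-\eta}\big)\le \alpha\eta(k+\beta)^{-\eta-1}$, while $\theta\varepsilon^k=\alpha\theta(k+\beta)^{-\eta}$; hence $\big(\tfrac{\Delta\varepsilon_k}{\theta\varepsilon^k}\big)^2=O(k^{-2})$ and $\theta\varepsilon^k\big(\tfrac{\Delta\varepsilon_k}{\theta\varepsilon^k}\big)^2 = \tfrac{(\Delta\varepsilon_k)^2}{\theta\varepsilon^k}=O(k^{-\eta-2})$. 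Since $(x^k)_k$ is bounded (Lemma \ref{lem:strong_convergence_explicit}), Lemma \ref{lem:rate_explicit} gives $C_\delta\ge 0$ and $k_0\ge 0$ with
\begin{equation*}
	\psi^{k+1} \le \big(1-2(1-\delta)\theta\varepsilon^k\big)\psi^k + C_\delta\,\theta\varepsilon^k\Big(\frac{\Delta\varepsilon_k}{\theta\varepsilon^k}\Big)^2 \quad\text{for all } k\ge k_0.
\end{equation*}
Using $\theta\varepsilon^k=\frac{\alpha\theta}{(k+\beta)^\eta}\ge \frac{\alpha\theta}{2^\eta}k^{-\eta}$ for $k\ge\beta$, together with the estimate on the forcing term, this becomes, after enlarging $k_0$ if necessary,
\begin{equation*}
	\psi^{k+1}\le\Big(1-\frac{c}{k^\eta}\Big)\psi^k+\frac{d}{k^{\eta+2}}\quad\text{for all } k\ge k_0,
\end{equation*}
for some constants $c,d>0$ (with $c=2^{1-\eta}(1-\delta)\alpha\theta$).

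\textbf{Conclusion.} This is precisely the regime covered by the sub-linear version of Chung's Lemma: a recursion $u_{k+1}\le(1-c k^{-s})u_k+d\,k^{-(s+r)}$ with $0<s<1$, $c,d>0$, $r>0$ forces $u_k=O(k^{-r})$, with no lower bound needed on $c$. Here $s=\eta$ and $r=2$, so $\psi^k=O(k^{-2})$ as $k\to+\infty$, whence $(2\psi^k)^{1/2}=O(k^{-1})$. Finally, since $M(x^k)=z^k-\varepsilon^k x^k$ with $\psi^k=\tfrac12\|z^k\|^2$ and $(x^k)_k$ bounded,
\begin{equation*}
	\|M(x^k)\|\le(2\psi^k)^{1/2}+\varepsilon^k\|x^k\| = O(k^{-1})+O(k^{-\eta}) = O(k^{-\eta}),
\end{equation*}
the last identity because $\eta\in(0,1)$, so that $k^{-\eta}$ dominates $k^{-1}$; this is the claimed rate.

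\textbf{Main obstacle.} There is no deep difficulty; the only points requiring care are (i) producing the uniform bound $|\Delta\varepsilon_k|\le\alpha\eta(k+\beta)^{-\eta-1}$ and reconciling the $\frac{k^\eta}{(k+\beta)^\eta}\to 1$ correction with the exact hypotheses of Chung's Lemma (exactly as $\xi^k\to1$ was handled in the proof of Corollary \ref{cor:rate_linear}), and (ii) making sure the appealed-to variant of Chung's Lemma (Appendix \ref{sec:chung_lemma}) indeed covers the sub-linear exponent $s=\eta<1$. Conceptually, the degradation of the rate is transparent: $\psi^k$ still decays like $k^{-2}$, but the anchor contribution $\varepsilon^k x^k=O(k^{-\eta})$ now dominates the residual, which is the price paid for a regularization that vanishes more slowly.
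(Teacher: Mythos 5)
Your proof is correct and follows essentially the same route as the paper: it reduces the $\psi$-recursion from Lemma \ref{lem:rate_explicit} with the choice \eqref{eq:choice_eps_power} to $\psi^{k+1}\le(1-c k^{-\eta})\psi^k+C k^{-(\eta+2)}$ and applies Lemma \ref{lem:chung_2} to get $\psi^k=O(k^{-2})$, after which the anchor term $\varepsilon^k\|x^k\|=O(k^{-\eta})$ dominates the residual bound. The only difference is that you spell out the elementary estimates ($|\Delta\varepsilon_k|\le\alpha\eta(k+\beta)^{-\eta-1}$, the explicit constant $c$) that the paper leaves implicit.
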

\begin{proof}
	The proof follows the same argument of Corollary \ref{cor:rate_linear} and therefore we only state the differences. In this case, we have once again $h^k = O(k^{-2})$ for $k\to +\infty$ and thus \eqref{eq:decrease_psi_discrete} writes as
	\begin{equation}
		\psi^{k+1} \leq \left(1 - \frac{c}{k^\eta} \right)\psi^k + \frac{C}{k^{2 + \eta}} \quad \text{for all} \ k \geq k_1\,,
	\end{equation}
	for some $c, C>0$ and $k_1 \geq 0$ large enough. We now make use of Lemma \ref{lem:chung_2} to obtain $\psi^k = O(k^{-2})$ as $k\to  +\infty$. Since $(2\psi^k)^\frac{1}{2} + O(k^{-\eta})\geq \|M(x^k)\|$, we get $\|M(x^k)\|= O(k^{-\eta})$ as $k\to +\infty$.
\end{proof}

\begin{remark}
    The asymptotic behaviour of the abstract rate in \eqref{eq:rate_feg_abstract} is the same as the one inferred for $(\psi^k)_k$. Indeed, if we denote by $r^{k+1}$ the right hand-side of \eqref{eq:rate_feg_abstract}, which satisfies $\psi^{k+1} \leq r^{k+1}$ for all $k \geq k_0$, it is easy to note that the sequence $(r^k)_k$ satisfies the following recursion
 \begin{equation}\label{eq:recursion_worst_case_rate}
		r^{k+1} =  (1- 2(1-\delta)\varepsilon^{k}\theta ) r^{k} +  C_\delta \theta \varepsilon^{k} \left(\frac{\varepsilon^{k+1}-\varepsilon^k}{ \theta \varepsilon^k}\right)^2 \, \quad \text{for all} \ k \geq k_0\,.
	\end{equation}
 Therefore, applying Chung's Lemma on this expression, we derive the same results as in Corollary \ref{cor:rate_linear} and Corollary \ref{cor:rate_power}.
 \end{remark}

\paragraph{Miscellaneous:} Our approach to deriving convergence rates leverages inequality \eqref{eq:decrease_psi_discrete} in combination with Chung's Lemma. Originally introduced by Chung in \cite{chung54} to study the stochastic gradient descent method, this lemma has since become a key tool in the stochastic optimization literature. It has been extensively used, and efforts have been made to derive nonlinear and non-asymptotic extensions \cite{mb11, ays20, jlmq24}. We can leverage these results to obtain several analytical bounds on $(\psi^k)_k$. However, due to space constraints, we limit our focus here to present the explicit expansion \eqref{eq:rate_feg_abstract} of the worst-case rate numerically. We consider four different parameter sequences $(\varepsilon^k)_k$, and plot the corresponding sequence $(r^k)_k$ in Figure \ref{fig:rates}.

We observe in particular that the choice $\varepsilon^k = \alpha \frac{2}{\pi} \frac{\arctan(k)}{\theta(k + \beta)}$ behaves like $\varepsilon^k = \frac{\alpha}{\theta(k + \beta)}$ for $\alpha>0$ and $\beta >0$ and in particular raises $\|M(x^k)\| = O(k^{-2})$ as $k \to +\infty$ for $\alpha > 1$. This distinction between $\alpha>1$ and $\alpha < 1$ does not seem to be present for $\varepsilon^k = \frac{\alpha}{k} \log (k)$, which always yields $r^k =O(k^{-2})$ as $k\to +\infty$. However, this choice corrupts the residual giving $\|M(x^k)\| = O (  \log(k)^{\frac{1}{2}} k^{-1} )$ as $k \to +\infty$. The same applies for $\varepsilon^k = \frac{1}{\log(k)}$, which despite seemingly yielding $r^k = O (k^{-2} )$ as $k \to +\infty$, would only give $\|M(x^k)\|=O ( \log(k)^{-\frac{1}{2}} )$ as $k\to +\infty$. In conclusion it appears that the best parameter choices in terms of worst-case residuals are of the form $\varepsilon^k := \frac{\alpha^k}{\theta(k + \beta)}$ with $\alpha^k = O(1)$ and $\alpha^k \geq 1$ for $k$ large enough.

\begin{figure}
    \centering
    \includegraphics[width=0.8\linewidth]{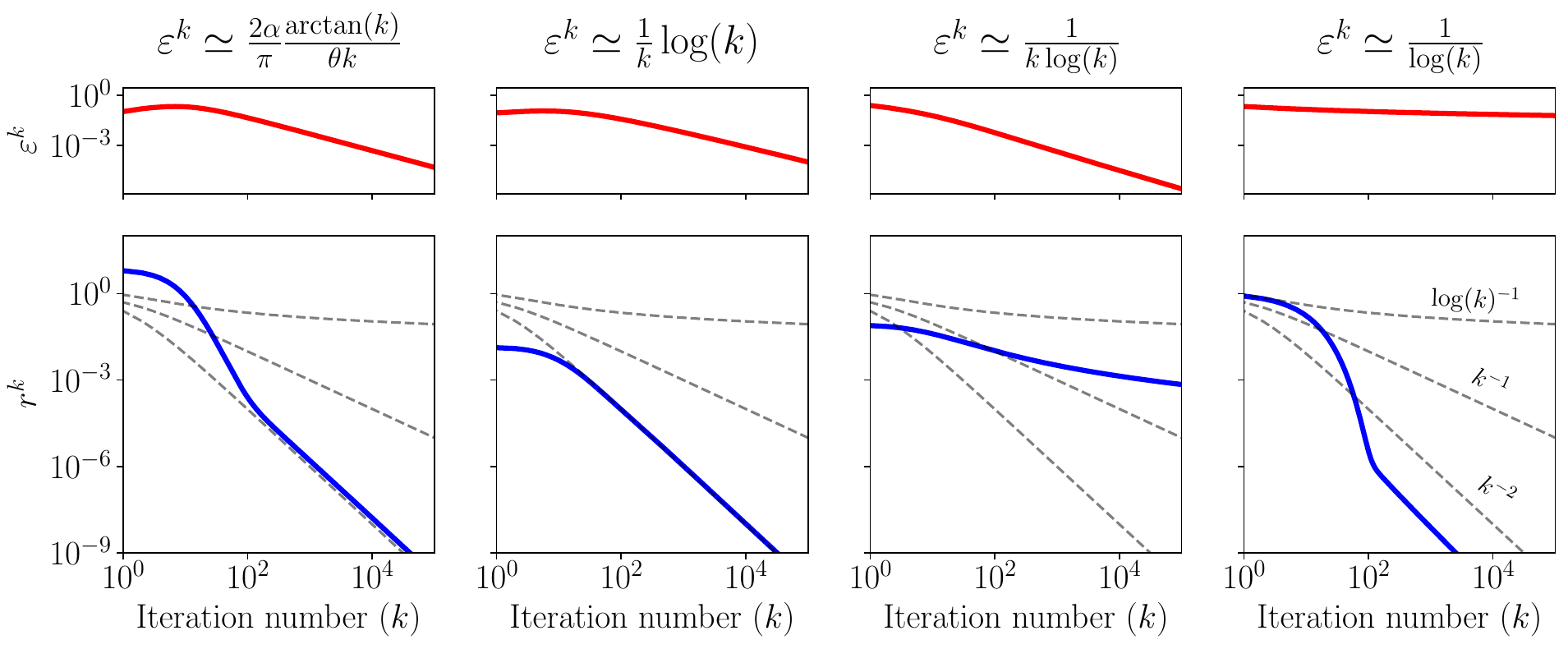}
    \caption{Numerical study of the worst-case rate. First row: the parameter choice $(\varepsilon^k)_k$. Second row: The sequence $(r^k)_k$ in \eqref{eq:recursion_worst_case_rate} as a function of $k$, the upper bound for $\psi^k = \frac{1}{2}\|M(x^k) + \varepsilon^k x^k \|^2$.} 
    \label{fig:rates}
\end{figure}

\section{Numerical Experiments}\label{sec:numerics}

In this section, we present our numerical experiments, which are performed in Python on a 12th-Gen.~Intel(R) Core(TM) i7--1255U, $1.70$--$4.70$ GHz laptop with $16$ Gb of RAM and are available for reproducibility at \href{https://github.com/echnen/general-anchored-extragradient}{https://github.com/echnen/general-anchored-extragradient}.

\paragraph{Preparation:} For each $L$-Lipschitz continuous operator $M\colon H\to H$ with $\zer M\neq \emptyset$, we initialize $x^0 \in H$ and consider the methods listed in Table \ref{tab:eag_type_methods} with the following specifications: 
\begin{itemize}
	\item \textit{(Extra-Anchored-Gradient Method with Constant step-sizes (EAG-C))} Defined as in Table \ref{tab:eag_type_methods} and with step-size $\eta = \frac{1}{8L}$. 
	\item \textit{(Extra-Anchored-Gradient Method with Variable step-sizes (EAG-V))} Defined as in Table \ref{tab:eag_type_methods} and variable step-sizes $\eta^k = \theta^k$ defined by the following recursion:
 \begin{equation}\label{eq:step-size_eag_v}
     \theta^{k+1} = \theta^k\left( 1 - \frac{(\theta^k)^2 L^2}{(k+1)(k+2)(1-(\theta^k)^2 L^2)}\right)\, \quad \text{for} \ k \geq 0\,.
 \end{equation}
    \item \textit{(Fast Extra-Gradient Method (FEG))} Defined as in Table \ref{tab:eag_type_methods}.
	\item \textit{(Anchored Popov's Method (APV))} Defined as in Table \ref{tab:eag_type_methods} with $\eta^k = \theta^k$ defined by:
	\begin{equation}\label{eq:step-size_apv}
		 \theta^{k+1} := \frac{(1-(\varepsilon^k)^2 - K (\theta^k)^2)\varepsilon^{k+1}\theta^k}{(1-K(\theta^k)^2)(1-\varepsilon^k)\varepsilon^k}  \quad \text{for} \ k \geq 0, \quad \eta \in \left(0, \frac{1}{2L\sqrt{3}}\right)\,, \quad K :=4L^2\,.
	\end{equation}
\end{itemize}

\subsection{Comparison}\label{sec:num_comparison}

In this section, we compare the performance of Algorithm \ref{alg:gfeg} against the variants listed in Table \ref{tab:eag_type_methods}. To do so, we build a nonlinear operator $M\colon H\to H$ with $H = \R^{2d}$ and $d = 5$ by considering
\begin{equation}
    N := \begin{bmatrix}
        0 & I\\
        -I & 0
    \end{bmatrix}\,, \quad \text{and} \quad x^* \in \R^{2d}\,, \quad b := N x^*\,.
\end{equation}
where $I\in \R^{d\times d}$ is the identity matrix, $0\in \R^{d\times d}$ is the zero matrix and $x^*$ is randomly sampled. This allows us to define the monotone linear operator $x\mapsto Nx - b$ with zero $x^*$. Note that $x^*$ is not necessarily zero, which might induce undesired effects. To induce some nonlinearity, we consider $P\colon H\to H$ to be projection onto a ball with radius $1$ and center $x^*$. Then, we define
\begin{equation}
    M(x):= Nx - b + P(x)\, \quad \text{for all} \ x \in \R^{2d}\,,
\end{equation}
which is indeed monotone and Lipschitz continuous with $M(x^*)=0$. We estimate the Lipschitz constant of $M$ by $L:= \|M\|_2 + 1$.

We compare the methods listed above with Algorithm \ref{alg:gfeg} with $\varepsilon^k=\frac{\alpha}{\theta(k + \beta)}$ and $10$ uniformly spaced choices of $\alpha$ in $(10^{-3}, 1]$ (including the edge case $\alpha = 1$) and $10$ uniformly spaced choices of $\alpha$ in $(1, 2]$. We further consider $4$ different choices of $\beta$: $\beta = 1, 2, 5, 50$. We put particular emphasis on the choice $\alpha = 1$, which according to Corollary \ref{cor:rate_linear}, separates the sub-optimal regime with rate $O(k^{- \alpha  (1-\delta)})$ for any $\delta \in (0, 1)$, with the optimal one of order $O(k^{-1})$. We plot the results in Figure \ref{fig:experiment_1} putting particular care in emphasizing the choice $\alpha = 1$, which lies on the middle of the range of considered parameters.   

\begin{figure}[t]
	\includegraphics[width=0.6\linewidth]{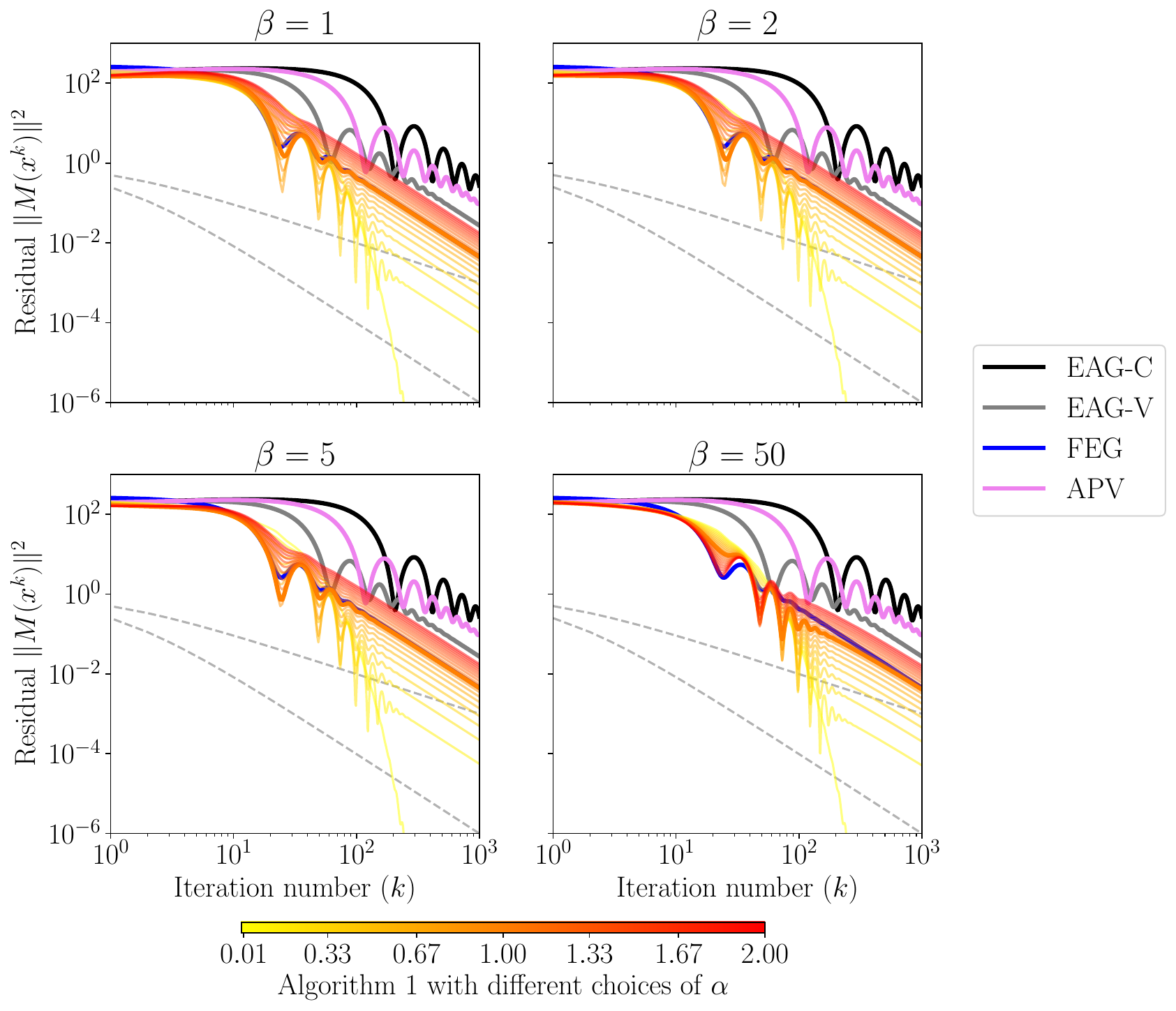}
	\caption{Results of the experiment in Section \ref{sec:num_comparison}. Comparing Algorithm \ref{alg:gfeg} with EAG-C, EAG-V, FEG and APV.}
	\label{fig:experiment_1}
\end{figure}

\paragraph{Comments:} Figure \ref{fig:experiment_1} reveals that Algorithm \ref{alg:gfeg} is particularly competitive against baseline choices. Notably, when $\alpha = 1$, the algorithm slightly outperforms FEG. Even more interesting is that for values of $\alpha < 1$, the algorithm shows even greater improvements, with a significant performance boost when $\alpha \simeq 10^{-3}$, despite leading to suboptimal worst-case asymptotic rates. This behavior could be attributed to the well-known decelerating effect of anchoring accelerations. While these accelerations achieve optimal rates, they often result in slower performance in practical cases. Thus, selecting $\alpha \simeq 0$ can lead to a noticeable advantage.

\subsection{A Practical Enhancement}\label{sec:num_heuristics}

Motivated by the observations in Section \ref{sec:num_comparison}, we propose here two parameter choices that prove particularly advantageous in practice. These are designed to combine the desirable fast convergence behaviour of the vanilla EG method without anchoring, and the improved worst-case rate with guaranteed (strong) convergence to specific points in $\zer M$ of anchoring mechanisms. We do so by considering:
\begin{equation}
   \varepsilon^k = \frac{\alpha^k}{\theta(k + \beta)}\, \quad \text{for all} \ k \geq 0\,, 
\end{equation}
where $\alpha^k$ is set to be either $\alpha^k:=\frac{2}{\pi} \arctan(M k)$, with $M\in \R_+$ (we set $M=10^{-3}$), or $\alpha^k$ to be equal to the same as before but further corrupted with Gaussian noise of variance $\sigma^k = \frac{1}{k}$ for all $k \geq 1$. We compare these two methods, denoted by G-EAG-C1 and G-EAG-C2, with the methods listed in Table \ref{tab:eag_type_methods} with the specifications presented at the beginning of Section \ref{sec:numerics}. The results are plotted in Figure \ref{fig:experiment_2}.

\begin{figure}[t]
	\includegraphics[width=0.6\linewidth]{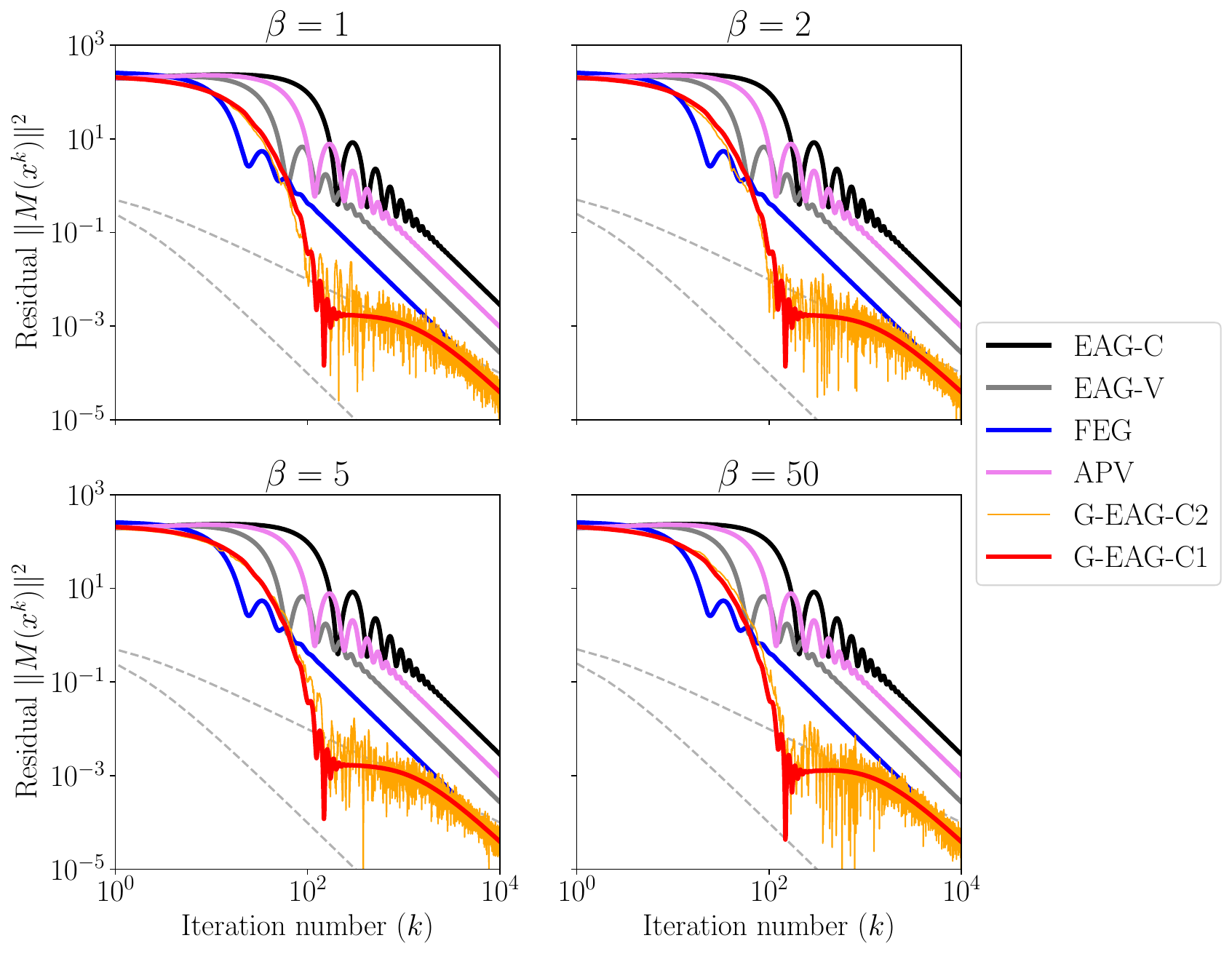}
	\caption{Result of experiment in Section \ref{sec:num_comparison}. Comparing Algorithm \ref{alg:gfeg} with EAG-C, EAG-V, FEG and APV.}
	\label{fig:experiment_2}
\end{figure}

\paragraph{Comments:} From Figure \ref{fig:experiment_2} we can observe that Algorithm \ref{alg:gfeg} with the enhancements proposed in Section \ref{sec:num_heuristics} seemingly outperforms the considered methods reaching the accuracy of $10^{-4}$ in about half the maximum of iterations allowed. The deterministic choice of $\alpha^k$, i.e., $\alpha^k = \frac{2}{\pi} \arctan(Mk)$ is particularly interesting as this choice falls into the framework of Theorem \ref{thm:convergence_feg} and can also be shown to yield the rate $O(k^{-1})$ of the residual, see Section \ref{sec:choice_of_eps} for numerical evidence of this fact.

\subsection{An Infinite Dimensional Example}\label{sec:num_infinite_dimensional}

In this section, we test Algorithm \ref{alg:gfeg} to solve an infinite dimensional problem on $H:= \ell^2$, the space of square summable sequences. Specifically, we consider the following operator:
\begin{equation}
    M(x) = x - Sx - b\,, \quad \text{where} \quad S(x_0, \dots) = (0, x_0, \dots)\,, \quad \text{for all} \ x := (x_0, x_1,\dots) \in H\,, 
\end{equation}
i.e., $S$ is the right-shift operator and $b\in H$ is $b:=(1, -1, 0,\dots)$. Note that since $S$ is nonexpansive, $M$ is indeed monotone and $2$-Lipschitz continuous. Additionally, solving the system $M(x)=0$ we obtain $x^*:=(1, 0, \dots)$.

To address this example, we set a maximum number of iterations $K \in \N$ and initialize $x^0 \in \Span\{\delta^0, \delta^1\}$ randomly. In this way, $x_1, \dots, x_K$ will be always contained in $\Span\{\delta^0, \dots, \delta^{2(K + 1)}\}\simeq \R^{2(K+1)}$. Specifically, we set $K:=10^3$. We test the methods described in Section \ref{sec:num_comparison} with an additional method: The naive implementation of EG, which is known to produce a sequence that only converges \emph{weakly} to the zero of $M$ with the rate $o(k^{-\frac{1}{2}})$. For this example, we plot both the residual as a function of the iterations, and the distance to the solution. These results are shown in Figure \ref{fig:infinite_dimensional}.

\begin{figure}[t]
    \centering
    \begin{subfigure}{0.415\linewidth}
        \includegraphics[width=\linewidth]{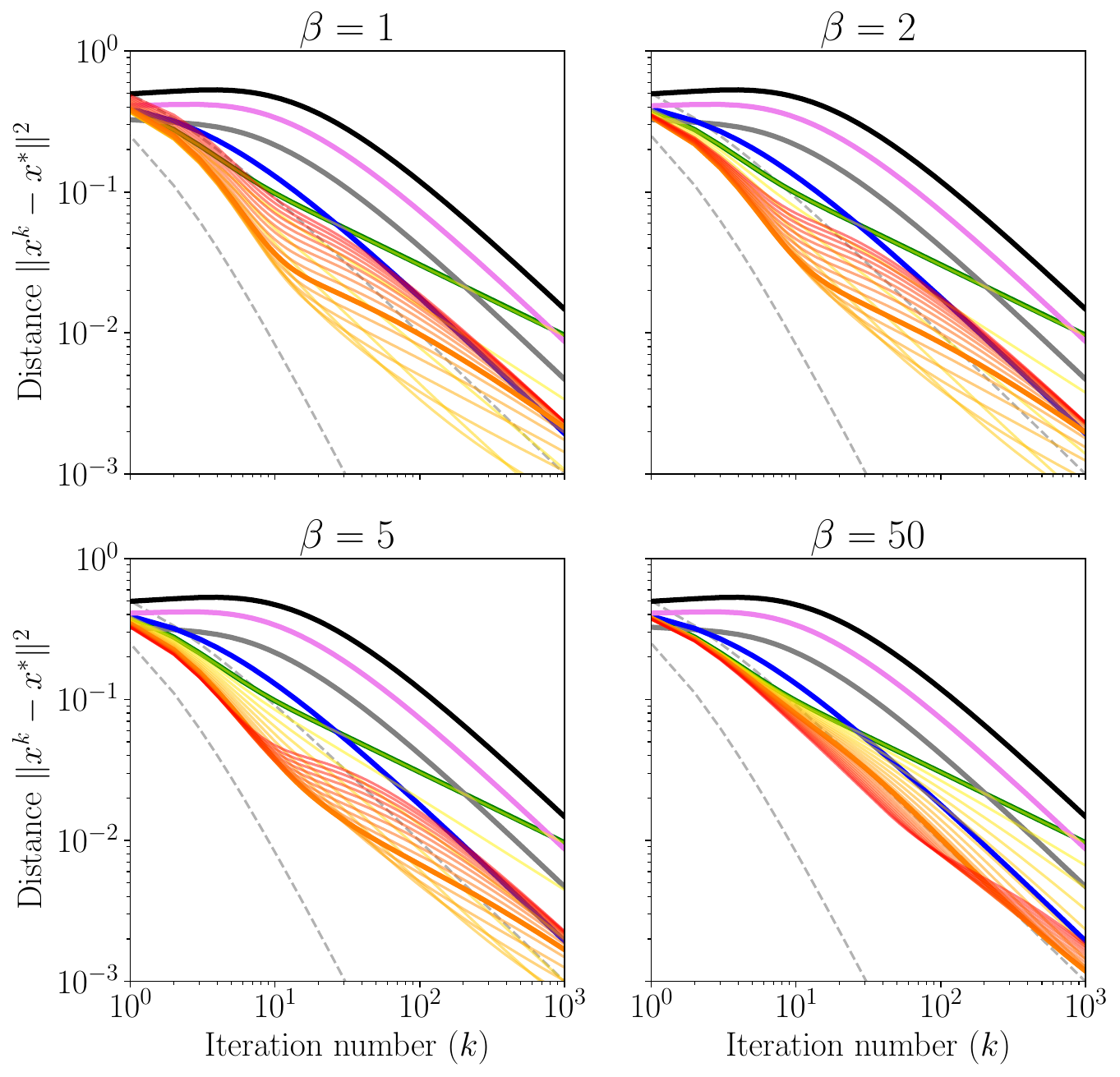}
        \caption{Distance to solution}
        \label{fig:infinite_dimensional_dists}
    \end{subfigure}
    \begin{subfigure}{0.575\linewidth}
        \includegraphics[width=\linewidth]{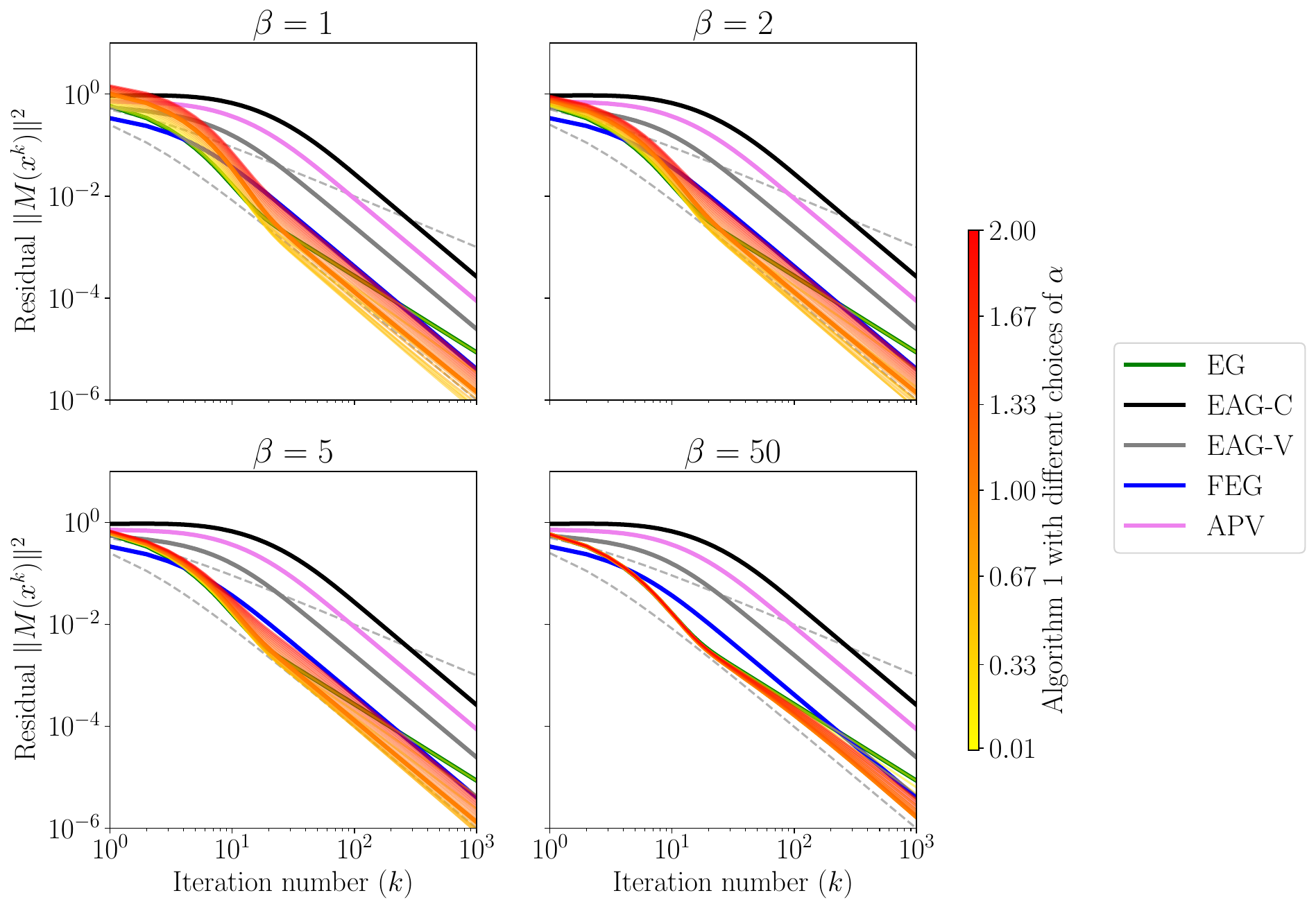}
        \caption{Residual}
    \end{subfigure}
    \caption{Results of the Experiment in Section \ref{sec:num_infinite_dimensional}. Comparison of EAG-type methods to solve an infinite dimensional problem in $\ell^2$.}
    \label{fig:infinite_dimensional}
\end{figure}

\paragraph{Comments:} From Figure \ref{fig:infinite_dimensional}, we see that the proposed method is able to yield excellent results even in an infinitely dimensional setting, with the case $\alpha = 1$ (colored in orange), consistently matching and sometimes surpassing FEG. Conversely, the naive implementation of EG produces a sequence that shows the typical $o(k^{-\frac{1}{2}})$ convergence rate. Algorithm \ref{alg:gfeg} is particularly competitive also in terms of distance to solution, as shown in Figure \ref{fig:infinite_dimensional_dists}. This example also seems to suggest that in this case EG only converges weakly, although this would become clearer letting the algorithm run for a much higher number of iterations.

\section{Conclusion}
In this paper, we proposed a new Extra-Anchored-Gradient-type scheme with a general choice of parameter sequences $(\varepsilon^k)_k$, which only has to satisfy \eqref{eq:condition_for_eps}. Our approach is profoundly based on the continuous time analysis of Tikhonov regularized monotone operator flows, which guide our convergence proof while suggesting critical design choices of the algorithm itself. In particular, it allows us to establish the strong convergence to a specific point in the solution set without relying on other systems dynamics. Although we allow for a general class of parameters, the main drawback of our approach is the lack of an explicit constant in the abstract rate \eqref{eq:rate_feg_abstract}. We suggest a possible approach to address this issue in Remark \ref{rmk:alpha_one}. Indeed, despite our numerical experiments suggest that Algorithm \ref{alg:gfeg} with $\varepsilon^k = \frac{\alpha}{\theta(k+\beta)}$ and $\alpha \simeq 1$ seems to slightly improve over FEG, it remains an open question whether its worst-case rate actually improves upon the one of FEG with this specific choice of $(\varepsilon^k)_k$.

\appendix
\section{Appendix}\label{sec:appendix}
%\addcontentsline{toc}{section}{Appendix}
\renewcommand{\thesubsection}{\Alph{subsection}}

\subsection{Discrete Calculus Rules}\label{sec:discrete_calculus}

Let $a:=(a^k)_k$, $b:=(b^k)_k$, and $c:=(c^k)_k$, be three real sequences. The product of two sequences $a$ and $b$ is denoted by $ab :=(a^k b^k)_k$. As usual, if $a=b$, we denote $a^2 = ab$. For all $k\geq 0$ it holds
\begin{equation}\label{eq:discrete_calculus_rules}
	\begin{aligned}
		&\Delta (ab)_k = \Delta a_k b^k +  a^{k+1}\Delta b_k\,,\\
		&\Delta (abc)_k = \Delta a_k b^k c^k + a^{k+1}\Delta b^k c^k + a^{k+1}b^{k+1}\Delta c_k\,,\\
		& \Delta a^2_k = 2(a^{k+1}-a^k)a^k +(\Delta a^k)^2 = 2(a^{k+1} - a^k)a^{k+1} - (\Delta a_k)^2 \,.
	\end{aligned}
\end{equation}
This formalism can be readily extended to sequences in a Hilbert space $H$, with products replaced by inner products. For instance, if $(z^k)_k$ is a sequence in $H$, and $\zeta^k := \frac{1}{2}\|z^k\|^2$ for all $k \geq 0$, we have for all $k \geq 0$
\begin{equation}
    \Delta \zeta_k =  \bra \Delta z_k, z^k \ket + \frac{1}{2} \| \Delta z_k \|^2 = \bra \Delta z_k, z^{k+1} \ket  - \frac{1}{2} \| \Delta z_k \|^2\,.
\end{equation}
Similar rules as those in \eqref{eq:discrete_calculus_rules} apply for products of sequences in $H$ and in $\R$, for instance if $(\varepsilon^k)_k$ is a real sequence, then for all $k \geq 0$
\begin{equation}
    \Delta (\varepsilon z)_k = \Delta \varepsilon_k z^k +  \varepsilon^{k+1}\Delta z_k = \Delta z_k \varepsilon^k +  z^{k+1}\Delta \varepsilon_k\,.
\end{equation}

\subsection{Gronwall in Continuous and Discrete Time}

The following result is an application of Grownwall's inequality, see for instance \cite[Lemma 1]{cps08}.
\begin{lemma}\label{lem:limsup_continuous}
Let $f \colon \left[t_{0} , + \infty \right) \to \R$ be a locally absolutely continuous function, $h \colon \left[t_{0} , + \infty \right) \to \R$ a bounded function and $\varepsilon \colon \left[t_{0} , + \infty \right) \to (0,+\infty)$ a locally integrable function. Suppose that
\begin{equation}
\dot{f}(t)+ \varepsilon (t) f (t) \leq \varepsilon (t) h (t) \quad \mbox{for a.e.} \ t \geq t_0\,.
\end{equation}
Denote by $\gamma(t):=\exp\Big(\int_{t_0}^t \varepsilon(s)ds \Big)$. Then,
\begin{equation}
    f(t) \leq \frac{f(t_0)}{\gamma(t)} + \frac{1}{\gamma(t)} \int_{t_0}^t \varepsilon(s) \gamma(s) h(s) ds\, \quad \text{for all} \ t \geq t_0\,.
\end{equation}
In particular, if $\int_{t_{0}}^{+ \infty} \varepsilon (t) dt = +\infty$, then 
\begin{equation}
\limsup_{t \to + \infty} f \left( t \right) \leq \limsup_{t \to + \infty} h \left( t \right).    
\end{equation}
\end{lemma}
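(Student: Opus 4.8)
The plan is to run the classical integrating-factor (Gronwall) argument. First I would note that $\gamma$ is itself locally absolutely continuous with $\dot\gamma(t) = \varepsilon(t)\gamma(t)$ for a.e.\ $t \geq t_0$, and $\gamma(t_0) = 1$. Consequently $t \mapsto \gamma(t) f(t)$ is locally absolutely continuous, and multiplying the hypothesis by the positive quantity $\gamma(t)$ gives, for a.e.\ $t \geq t_0$,
\[
\frac{d}{dt}\big(\gamma(t) f(t)\big) = \gamma(t)\big(\dot f(t) + \varepsilon(t) f(t)\big) \leq \gamma(t)\varepsilon(t) h(t).
\]
Integrating from $t_0$ to $t$ (legitimate by the fundamental theorem of calculus for locally absolutely continuous functions) and using $\gamma(t_0)=1$ yields $\gamma(t) f(t) - f(t_0) \leq \int_{t_0}^t \gamma(s)\varepsilon(s) h(s)\, ds$, and dividing by $\gamma(t) > 0$ produces exactly the first displayed inequality of the statement.

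For the asymptotic claim I would argue as follows. Since $h$ is bounded, $\ell := \limsup_{t\to\infty} h(t)$ is a real number; fix any $\mu > \ell$, so that there is $t_1 \geq t_0$ with $h(s) \leq \mu$ for all $s \geq t_1$. Splitting the integral at $t_1$ and estimating $\int_{t_1}^t \gamma(s)\varepsilon(s) h(s)\, ds \leq \mu \int_{t_1}^t \gamma(s)\varepsilon(s)\, ds = \mu\big(\gamma(t) - \gamma(t_1)\big)$ via $\dot\gamma = \varepsilon\gamma$, the pointwise bound above becomes, for $t \geq t_1$,
\[
f(t) \leq \frac{f(t_0) + C_1 - \mu\gamma(t_1)}{\gamma(t)} + \mu,
\]
where $C_1 := \int_{t_0}^{t_1}\gamma(s)\varepsilon(s)h(s)\,ds$ is finite because $\gamma$ is continuous (hence bounded) on $[t_0,t_1]$, $\varepsilon$ is locally integrable, and $h$ is bounded. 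The assumption $\int_{t_0}^{+\infty}\varepsilon = +\infty$ together with $\varepsilon > 0$ forces $\gamma(t) \to +\infty$, so the first term vanishes as $t\to+\infty$ and $\limsup_{t\to\infty} f(t) \leq \mu$. Letting $\mu \downarrow \ell$ gives the conclusion.

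The computation is essentially routine; the only points deserving a little care are the justification that $\gamma f$ is locally absolutely continuous (so that one may legitimately integrate the differential inequality), the finiteness of the tail-free integral $C_1$, and keeping track of the sign of $\mu$ in the final estimate. I do not expect any genuine obstacle here — this lemma is purely a packaging of Gronwall's inequality in the form needed later.
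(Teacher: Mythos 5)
Your argument is correct and is exactly the standard integrating-factor (Gronwall) proof; the paper itself omits the proof and simply cites \cite[Lemma 1]{cps08}, where the same computation appears. The one point you flag as needing care, the sign of $\mu$, is in fact harmless since the estimate $\gamma(s)\varepsilon(s)h(s)\leq \mu\,\gamma(s)\varepsilon(s)$ only uses $\gamma\varepsilon\geq 0$, so no case distinction is needed.
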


The following result is the discrete counterpart of Lemma \ref{lem:limsup_continuous}. For this result, we only present the derivation of the explicit expansion \eqref{eq:explicit_expansion}, as \eqref{eq:xu} can be found, e.g., in \cite[Lemma 2.5]{xu02}. 
\begin{lemma}\label{lem:limsup_discrete}
Let $(f^k)_k$ and $(h^k)_k$ be two sequences in $\R$ and $(\varepsilon^k)_k$ be a nonnegative sequence satisfying
\begin{equation}
    f^{k+1} - f^k + \varepsilon^k f^k \leq \varepsilon^k h^{k} \, \quad \text{for all} \ k \geq 0\,.
\end{equation}
Suppose further that $\varepsilon^k < 1$ for all $k \geq 0$. Then it holds 
\begin{equation}\label{eq:explicit_expansion}
    f^{k+1} \leq \frac{f^0}{\gamma^k}  + \frac{1}{\gamma^k} \sum_{j=0}^{k} \gamma^j \varepsilon^j h^j\, \quad \text{for all} \ k \geq 0\,,
\end{equation}
where $\gamma^k := \prod_{\ell = 0}^k (1-\varepsilon^\ell)^{-1}$.  In particular, if $\sum_{k=0}^{+\infty}\varepsilon^k = +\infty$, it holds
\begin{equation}\label{eq:xu}
    \limsup_{k\to +\infty} f^k \leq \limsup_{k\to +\infty} h^k\,.
\end{equation}
\end{lemma}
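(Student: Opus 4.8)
The plan is to establish only the explicit expansion \eqref{eq:explicit_expansion}, since the ``in particular'' statement \eqref{eq:xu} is classical and already available in \cite[Lemma 2.5]{xu02}. The starting point is to rewrite the hypothesis as
\begin{equation*}
	f^{k+1} \leq (1-\varepsilon^k) f^k + \varepsilon^k h^k \quad \text{for all} \ k \geq 0\,,
\end{equation*}
which is meaningful to iterate precisely because $1-\varepsilon^k > 0$. The natural device is the discrete \emph{summation factor} $\gamma^k = \prod_{\ell=0}^k (1-\varepsilon^\ell)^{-1}$, which plays the role of the integrating factor $\gamma(t)=\exp\big(\int_{t_0}^t \varepsilon(s)ds\big)$ from the continuous-time Lemma \ref{lem:limsup_continuous}. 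Its defining feature is the telescoping identity $\gamma^k(1-\varepsilon^k)=\gamma^{k-1}$ for $k\geq 1$, together with $\gamma^0 = (1-\varepsilon^0)^{-1}$, i.e.\ $\gamma^k(1-\varepsilon^k)=\gamma^{k-1}$ for all $k\geq 0$ under the convention $\gamma^{-1}:=1$ (empty product).

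The main step is then short: multiplying the inequality above by $\gamma^k>0$ and invoking this identity gives $\gamma^k f^{k+1} \leq \gamma^{k-1} f^k + \gamma^k \varepsilon^k h^k$ for every $k\geq 0$. Setting $g^k := \gamma^{k-1} f^k$ (so that $g^0 = f^0$), this reads $g^{k+1} \leq g^k + \gamma^k \varepsilon^k h^k$, and summing from $0$ to $k$ telescopes to
\begin{equation*}
	\gamma^k f^{k+1} = g^{k+1} \leq f^0 + \sum_{j=0}^{k}\gamma^j \varepsilon^j h^j\,.
\end{equation*}
Dividing by $\gamma^k>0$ yields \eqref{eq:explicit_expansion}. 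Alternatively, one can prove \eqref{eq:explicit_expansion} by a one-line induction on $k$, using $(1-\varepsilon^{k+1})/\gamma^k = 1/\gamma^{k+1}$ in the inductive step; I would lead with the summation-factor computation since it mirrors the continuous-time argument most transparently.

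There is essentially no genuine obstacle here: the only points requiring attention are the edge case $k=0$ (handled cleanly by the convention $\gamma^{-1}=1$) and the positivity of all $\gamma^k$, which is exactly guaranteed by the hypothesis $\varepsilon^k<1$. For completeness one could note that this hypothesis combined with $\sum_k \varepsilon^k=+\infty$ forces $\gamma^k\to+\infty$, so the first term in \eqref{eq:explicit_expansion} vanishes, while a Stolz--Ces\`aro/weighted-average argument applied to $\frac{1}{\gamma^k}\sum_{j=0}^k \gamma^j\varepsilon^j h^j$ (using $\gamma^j-\gamma^{j-1}=\gamma^j\varepsilon^j$) bounds its $\limsup$ by $\limsup_k h^k$; but as this is precisely \cite[Lemma 2.5]{xu02}, I would simply cite it rather than reprove it.
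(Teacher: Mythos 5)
Your proof is correct and is essentially the same argument as the paper's: the paper unrolls the recursion $f^{k+1}\leq(1-\varepsilon^k)f^k+\varepsilon^k h^k$ directly down to $f^0$ and then identifies the resulting products with $\gamma^j/\gamma^k$, which is exactly your summation-factor telescoping written out term by term. Your handling of the second claim by citing \cite[Lemma 2.5]{xu02} also matches the paper, which proves only \eqref{eq:explicit_expansion} and refers to Xu for \eqref{eq:xu}.
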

\begin{proof}[Proof of \eqref{eq:explicit_expansion}]
    Let $k \geq 0$, and observe that
    \begin{equation}
        \begin{aligned}
           f^{k+1} &\leq (1 - \varepsilon^k) f^k + \varepsilon^k h^k\\
           & \leq (1 - \varepsilon^k) (1 - \varepsilon^{k-1}) f^{k-1} + (1 - \varepsilon^k) \varepsilon^{k-1} h^{k-1} + \varepsilon^k h^k\\
           & \dots\\
           & \leq \prod_{\ell = 0}^k (1 - \varepsilon^j) f^0 + \sum_{j =0}^k \prod_{\ell = j + 1}^k (1 - \varepsilon^\ell) \varepsilon^j h^j\,,
        \end{aligned}
    \end{equation}
    where for convenience we set $\prod_{\ell = k + 1}^k (1 - \varepsilon^\ell) = 1$. Using the definition of $\gamma^k$ we get \eqref{eq:explicit_expansion}.
\end{proof}
The term $\gamma^k$ can indeed be understood as the discrete counterpart of $t \mapsto \gamma(t)$, since
\begin{equation}
    \gamma^k = \exp\Bigg(\log\left( \prod_{\ell = 0}^k (1-\varepsilon^j)^{-1}\right)\Bigg) =  \exp\bigg(-\sum_{\ell = 0}^k \log\left(1-\varepsilon^j\right) \bigg) \simeq  \exp \bigg(\sum_{\ell = 0}^k \varepsilon^\ell\bigg)\,,
\end{equation}
by Taylor expansion of $t \mapsto \log(1 - t)$ around $0$.

\subsection{Chung's Lemma}\label{sec:chung_lemma}

For completeness, we include below two variants of Chung's lemma \cite{chung54} as presented in Polyak's monograph \cite[Section 2.2]{polyak87}.

\begin{lemma}[Lemma 4 in \cite{polyak87}]\label{lem:chung_1} Let $(\psi^k)_k$ be a non-negative sequence and let $p, c, C > 0$. Suppose that
\begin{equation}
    \psi^{k+1} \leq \left(1 - \frac{c}{k} \right) \psi^k + \frac{C}{k^{p + 1}}\, \quad \text{for all} \ k \geq 1\,.
\end{equation}
Then, the following holds true as $k \rightarrow +\infty$
\begin{equation}
    \begin{aligned}
        &\psi^k \leq C (c - p)^{-1} k^{-p} + o(k^{-p}), \quad &\text{if} \ c > p\,;\\
        &\psi^k = O(k^{-c} \log (k) ), \quad &\text{if} \ c = p\,;\\
        &\psi^k = O(k^{-c}), \quad &\text{if} \ c < p\,.
    \end{aligned}
\end{equation}
\end{lemma}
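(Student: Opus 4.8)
The idea is to rescale $(\psi^k)_k$ so that in each of the three regimes the recursion becomes a mild perturbation of a ``clean'' Chung-type recursion, which is then settled by the discrete Gronwall inequality of Lemma~\ref{lem:limsup_discrete}. Put $q:=\min\{c,p\}$ and $u^k:=k^q\psi^k$. Multiplying the hypothesis by $(k+1)^q$ and using $(1+\tfrac1k)^q=1+\tfrac qk+O(k^{-2})$ together with $(1+\tfrac1k)^q(1-\tfrac ck)=1-\tfrac{c-q}{k}+O(k^{-2})$, one gets
\begin{equation*}
 u^{k+1}\ \le\ \Big(1-\tfrac{c-q}{k}+O(k^{-2})\Big)u^k\ +\ \tilde C^k\,k^{\,q-p-1},\qquad \tilde C^k:=C\big(1+\tfrac1k\big)^q\to C ,
\end{equation*}
where $c-q\ge0$ and $q-p-1\le-1$, with $c-q=0$ exactly when $c\le p$ and $q-p-1=-1$ exactly when $c\ge p$. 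All estimates below are meant for $k$ beyond a fixed index $k_1$, which is harmless for the asymptotics.

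\emph{The cases $c\le p$.} Here $c-q=0$, so the coefficient of $u^k$ is $1+b_k$ with $\sum_k|b_k|<\infty$; hence the product $\Pi^k:=\prod_{j\ge k_1}(1+b_j)$ converges to a positive limit, and $v^k:=u^k/\Pi^{k-1}$ satisfies $v^{k+1}\le v^k+O(k^{\,q-p-1})$. If $c<p$ the exponent $q-p-1<-1$, the increments are summable, $v^k$ and hence $u^k=k^c\psi^k$ are bounded, so $\psi^k=O(k^{-c})$. If $c=p$ then $q-p-1=-1$, the partial sums of the increments are $O(\log k)$, so $u^k=k^c\psi^k=O(\log k)$, i.e.\ $\psi^k=O(k^{-c}\log k)$.

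\emph{The case $c>p$.} Now $q=p$ and $c-q=c-p>0$, so $u^{k+1}\le(1-\tfrac{c-p}{k}+O(k^{-2}))u^k+\tilde C^k/k$. Since $c-p>0$, a routine induction first shows that $u^k$ is bounded; then $O(k^{-2})u^k=O(k^{-2})$ may be folded into the forcing part, giving
\begin{equation*}
 u^{k+1}\ \le\ \Big(1-\tfrac{c-p}{k}\Big)u^k+\tfrac{c-p}{k}\,r^k,\qquad r^k:=\tfrac{1}{c-p}\big(\tilde C^k+O(k^{-1})\big)\to\tfrac{C}{c-p}.
\end{equation*}
Applying Lemma~\ref{lem:limsup_discrete} with $\varepsilon^k:=\tfrac{c-p}{k}$ (nonnegative, eventually $<1$, and with $\sum_k\varepsilon^k=+\infty$) yields $\limsup_k u^k\le\limsup_k r^k=\tfrac{C}{c-p}$, i.e.\ $\psi^k\le\tfrac{C}{c-p}k^{-p}+o(k^{-p})$.

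\emph{Main obstacle.} The delicate point is the sharp constant in the regime $c>p$: one must guarantee that the $O(k^{-2})$ corrections — arising both from the binomial expansion of $(1+\tfrac1k)^p$ and from carrying $u^k$ through the recursion — do not perturb the $k^{-p}$ leading term. This is precisely why $u^k$ is first shown bounded and the corrections are only afterwards demoted into the auxiliary sequence $r^k$, so that Lemma~\ref{lem:limsup_discrete} can be invoked cleanly. The cases $c\le p$ are comparatively soft, needing only that $\prod(1+b_j)$ converges when $\sum|b_j|<\infty$ together with elementary harmonic-sum bookkeeping. (An alternative, less streamlined route is a direct induction with the ansätze $\psi^k\le Ak^{-p}$, $\psi^k\le Ak^{-c}\log k$, $\psi^k\le Ak^{-c}$ respectively, choosing $A$ appropriately in each regime.)
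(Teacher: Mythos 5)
Your proof is correct. Note that the paper itself gives no proof of this lemma: it is quoted verbatim as Lemma 4 of Polyak's monograph \cite{polyak87} and included in the appendix ``for completeness,'' so there is no in-paper argument to compare against. Your rescaling $u^k:=k^q\psi^k$ with $q=\min\{c,p\}$ is essentially the classical Chung--Polyak device, and all three regimes check out: for $c\le p$ the perturbation $b_k=O(k^{-2})$ of the unit coefficient is absolutely summable, so the telescoping via the convergent product $\Pi^k$ is legitimate and the harmonic-sum bookkeeping gives $O(1)$ resp.\ $O(\log k)$; for $c>p$ the preliminary boundedness induction (valid once $k$ is large enough that the coefficient $1-(c-p)/k+O(k^{-2})$ is positive and dominated by $1-(c-p)/(2k)$) is exactly what is needed to demote the $O(k^{-2})u^k$ term into the forcing sequence $r^k$, after which Lemma~\ref{lem:limsup_discrete} delivers the sharp constant $\limsup_k k^p\psi^k\le C/(c-p)$. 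The one genuinely nice feature of your route is that the final step in the regime $c>p$ is carried by the paper's own discrete Gronwall/Xu lemma rather than by a separate limsup computation, which makes the appendix self-contained and ties Chung's lemma to the same tool used in Theorem~\ref{thm:convergence_feg}; the restriction of all estimates to $k\ge k_1$ is harmless, as you say, since Lemma~\ref{lem:limsup_discrete} applies after reindexing and only asymptotics are claimed.
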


\begin{lemma}[Lemma 5 in \cite{polyak87}]\label{lem:chung_2} Let $(\psi^k)_k$ be a non-negative sequence and let $c, C > 0$, and $0 < s < 1$, $s < t$. Suppose that
\begin{equation}
    \psi^{k+1} \leq \left(1 - \frac{c}{k^s} \right) \psi^k + \frac{C}{k^{t}}\, \quad \text{for all} \ k \geq 1\,.
\end{equation}
Then, as $k \rightarrow +\infty$,
\begin{equation}
   \psi^{k} \leq \frac{C}{c} \frac{1}{k^{t - s}} + o\left(\frac{1}{k^{t-s}}\right)\,.
\end{equation}
\end{lemma}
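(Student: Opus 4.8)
The plan is to prove that $\limsup_{k\to+\infty} k^{t-s}\psi^k \le C/c$, which is exactly what the asserted bound amounts to, and to do this by a ``moving target'' induction. Write $a := t-s > 0$, fix an arbitrary $\e>0$, and set $b^k := (C/c+\e)\,k^{-a}$. The first step is a \emph{propagation} claim: there is $K_\e$ such that for every $k\ge K_\e$ one has the implication $\psi^k\le b^k \Rightarrow \psi^{k+1}\le b^{k+1}$. Indeed, for $k$ large enough that $1-ck^{-s}\ge 0$, plugging $\psi^k\le b^k$ and $\psi^k\ge 0$ into the recursion gives
\begin{equation*}
  \psi^{k+1} \le (1-ck^{-s})b^k + Ck^{-t} = \Big(\tfrac{C}{c}+\e\Big)k^{-a} - c\e\,k^{-t},
\end{equation*}
where the identity uses $s+a=t$ and $c\big(\tfrac{C}{c}+\e\big)=C+c\e$. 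Since $x\mapsto x^{-a}$ is convex and decreasing, $k^{-a}-(k+1)^{-a}\le a\,k^{-a-1}$, and because $a+1-t=1-s>0$ we have $a\big(\tfrac{C}{c}+\e\big)k^{-a-1}\le c\e\,k^{-t}$ for all $k$ large; hence $\psi^{k+1}\le\big(\tfrac{C}{c}+\e\big)(k+1)^{-a}=b^{k+1}$. The sign condition $1-s>0$ is essential here.

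The second step is the \emph{base case}: I claim some $k_0\ge K_\e$ satisfies $\psi^{k_0}\le b^{k_0}$. Suppose not, so $\psi^k > b^k$ for all $k\ge K_\e$. A direct computation yields $Ck^{-t}=\lambda\,c\,k^{-s}b^k$ with $\lambda := C/(C+c\e)\in(0,1)$, whence $Ck^{-t}<\lambda c\,k^{-s}\psi^k$ and the recursion improves to $\psi^{k+1}<\big(1-(1-\lambda)c\,k^{-s}\big)\psi^k$ for all $k$ large. Iterating and using $1+x\le e^x$,
\begin{equation*}
  \psi^k \le \psi^K\prod_{j=K}^{k-1}\big(1-(1-\lambda)c\,j^{-s}\big) \le \psi^K\exp\!\Big(-(1-\lambda)c\sum_{j=K}^{k-1}j^{-s}\Big),
\end{equation*}
and since $s<1$ the exponent is bounded above by $-\text{const}\cdot k^{1-s}\to-\infty$, so $\psi^k$ decays faster than any power of $k$ --- contradicting $\psi^k>(C/c+\e)k^{-a}$ for all $k\ge K_\e$.

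Combining the two steps, $\psi^k\le b^k$ for every $k\ge k_0$, so $\limsup_{k\to+\infty}k^{a}\psi^k\le C/c+\e$; letting $\e\downarrow 0$ gives $\limsup_{k\to+\infty}k^{a}\psi^k\le C/c$, i.e. $\psi^k\le\frac{C}{c}k^{-(t-s)}+o(k^{-(t-s)})$. I expect the base case to be the main obstacle: a priori nothing prevents $\psi^k$ from staying just above the target $b^k$ forever, and the decisive observation is that the contrary hypothesis in fact forces the far stronger decay of the type $\exp(-\text{const}\cdot k^{1-s})$, incompatible with a merely polynomial lower bound; the remaining work in the propagation step is pure bookkeeping of exponents, where the convexity estimate on $k^{-a}-(k+1)^{-a}$ and the sign of $1-s$ must be tracked with care. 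If one prefers to avoid the contradiction argument, one can alternatively first invoke Lemma \ref{lem:limsup_discrete} to obtain $\psi^k\to 0$ and then compare $\psi^k$ with the explicit solution of the associated equality recursion, but the moving-target scheme seems the most economical.
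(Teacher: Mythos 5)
Your argument is correct. Note that the paper itself does not prove this statement: it is imported verbatim from Polyak's monograph (Lemma~5 in Section~2.2 there), so there is no in-paper proof to compare against. Your ``moving target'' scheme is a clean, self-contained substitute. The propagation step is sound: the identity $(1-ck^{-s})b^k+Ck^{-t}=(C/c+\e)k^{-a}-c\e\,k^{-t}$ checks out, the estimate $k^{-a}-(k+1)^{-a}\le a\,k^{-a-1}$ is valid since $x\mapsto x^{-a}$ is convex and decreasing, and the exponent comparison $a+1-t=1-s>0$ is exactly what makes the slack term $c\e\,k^{-t}$ dominate; you also correctly flag that $1-ck^{-s}\ge 0$ is needed to multiply the inductive bound through. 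The base case is the genuinely nontrivial part and your contradiction argument handles it: if $\psi^k$ stayed above $b^k$ forever, the forcing term would be absorbed into a contractive factor $1-(1-\lambda)ck^{-s}$, and since $s<1$ the resulting decay $\exp(-\mathrm{const}\cdot k^{1-s})$ is super-polynomial, which is incompatible with the assumed polynomial lower bound. Finally, $\limsup_k k^{t-s}\psi^k\le C/c$ is indeed equivalent to the stated $\frac{C}{c}k^{-(t-s)}+o(k^{-(t-s)})$ bound. For comparison, Polyak's own proof proceeds by the substitution $v^k:=\psi^k-\frac{C}{c}k^{-(t-s)}$ and shows $v^k$ obeys an almost-homogeneous recursion whose solution tends to zero relative to $k^{-(t-s)}$; that route gives the constant slightly more directly, whereas yours avoids tracking the perturbed recursion at the price of the $\e$-limiting argument. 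Both are legitimate; no gaps.
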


\bibliographystyle{plain} % We choose the "plain" reference style
\bibliography{references} % Entries are in the refs.bib file

\end{document}